\newtheorem{theorem}{Theorem}
\newtheorem{corollary}{Corollary}
\theoremstyle{remark}
\newtheorem{example}{Example}
\newtheorem{remark}{Remark}
\newcommand{\E}{\mathcal{E}}
\newcommand{\ES}{\mathcal{E}}
\newcommand{\Ex}{\mathbb{E}}
\newcommand{\life}{t_+}
\newcommand{\nlife}{t_{-}}
\newcommand{\BS}{\mathcal{X}}
\newcommand{\Ker}{\mathcal{N}}
\renewcommand{\xi}{X}
\newcommand{\sQ}{\mathfrak{Q}}
\renewcommand{\vartheta}{\rho}
\newcommand{\Ha}{H}
\renewcommand{\psi}{h}
\begin{document}

\title{Positive semigroups and perturbations of boundary conditions}
\thanks{
This work was partially supported  by the Polish NCN grant  2017/27/B/ST1/00100 and by the grant 346300 for IMPAN from the Simons Foundation and the matching 2015-2019 Polish MNiSW fund.
}

\author{Piotr Gwi\.zd\.z }
\address{Institute of Mathematics, University of Silesia, Bankowa 14, 40-007
Katowice, Poland}
\email{piotr.gwizdz@gmail.com}

\author{Marta Tyran-Kami\'nska}
\address{Institute of Mathematics, Polish Academy of Sciences, Bankowa 14, 40-007
Katowice, Poland}
\email{mtyran@us.edu.pl}


\begin{abstract}
We present a generation theorem for positive semigroups on an $L^1$ space. It provides  sufficient conditions for the existence of positive and integrable solutions of initial-boundary value problems. An application to a two-phase cell cycle model is given.
\end{abstract}

\keywords{Positive semigroup, Perturbation of boundary conditions, Steady state, Cell cycle models}
\subjclass{47B65,  47H07, 47D06,92C40}%
\maketitle

\section{Introduction}

We study well-posedness of linear evolution equations on $L^1$  of the form
\begin{equation}\label{e:eq}
u'(t) =Au(t),\quad \Psi_0 u(t)=\Psi u(t),\quad t> 0,\quad u(0)=f,
\end{equation}
where $\Psi_0,\Psi$ are positive and possibly unbounded linear operators on $L^1$, the linear operator $A$ is such that equation \eqref{e:eq} with $\Psi=0$ generates a \emph{positive semigroup} on $L^1$, i.e., a~$C_0$-semigroup of positive operators on $L^1$.  We present sufficient conditions for the operators $A, \Psi_0,$ and $\Psi$ under which there is a unique positive semigroup on $L^1$ providing solutions of the initial-boundary value problem~\eqref{e:eq}. For a general theory of positive semigroups and their applications we refer the reader to~\cite{nagel86_0,banasiakarlotti06,bobrowski16,kramarrhandi17,rudnickityran17}. An overview of  different approaches used in studying initial-boundary value problems is presented in \cite{bobrowski1015}.

Our result is an extension of Greiner's~\cite{greiner} by considering unbounded $\Psi$ and positive semigroups.  Unbounded perturbations of the boundary conditions of a generator were studied recently in \cite{adler2014perturbation} and \cite{adler2017perturbation} by using extrapolated spaces and
various admissibility conditions.
In the proof of our perturbation theorem  we apply a result  about positive perturbations of resolvent positive operators \cite{arendt87} with non-dense domain in  $AL$-spaces in the form given in
\cite[Theorem 1.4]{thieme96}. It is an extension of the well known perturbation result due to Desch \cite{desch} and by Voigt \cite{voigt89}. For positive perturbations of positive semigroups in the case when the space is not an $AL$-space we refer to~\cite{arendtrhandi91,voigt2018AM}.
We also present a result about stationary  solutions of \eqref{e:eq}. We illustrate our general results with an age-size-dependent cell
cycle model generalizing the discrete time model of \cite{hannsgen85a,al-mcm-jt-92,tyrcha01}. This model can be described as a piecewise deterministic Markov process (see Section \ref{s:fr} and \cite{rudnickityran17}). Our approach can also be used in transport equations \cite{kramarsikolya05,banasiakfalkiewicz15}.

\section{General Results}
\label{s:pre}

Let $(E,\ES,m)$ and $(E_{\partial},\ES_{\partial},m_{\partial})$ be two $\sigma$-finite measure spaces. Denote by $L^1=L^1(E,\ES,m)$ and $L^1_{\partial}=L^1(E_{\partial},\ES_{\partial},m_{\partial})$ the corresponding spaces of integrable functions.
Let $\mathcal{D}$ be a linear subspace of $ L^1$.
We assume that $A\colon \mathcal{D}\to L^1$ and $\Psi_0,\Psi\colon \mathcal{D}\to L^1_{\partial}$ are linear operators satisfying the following conditions:
\begin{enumerate}[(i)]
\item\label{S1} for each  $\lambda>0$, the operator $\Psi_0\colon \mathcal{D}\to L^1_{\partial}$ restricted to the nullspace $\Ker(\lambda I-A)=\{f\in \mathcal{D}:\lambda f-Af=0\}$ of the operator $(\lambda I-A,\mathcal{D})$ has a positive right inverse, i.e., there exists a positive operator $\Psi(\lambda)\colon L^1_{\partial} \to \Ker(\lambda I-A)$ such that $\Psi_0\Psi(\lambda)f_{\partial}=f_{\partial}$ for $f_{\partial}\in L^1_{\partial}$;
    \item\label{S3} the operator $\Psi\colon \mathcal{D} \to L^1_{\partial}$ is positive and there exists $\omega\in \mathbb{R}$ such that the operator $I_{\partial}-\Psi \Psi(\lambda)\colon L^1_{\partial}\to L^1_{\partial}$ is invertible with positive inverse for all $\lambda>\omega$, where $I_{\partial}$ is the identity operator on $L^1_{\partial}$;

\item\label{S2} the operator $A_0\subseteq A$ with $\mathcal{D}(A_0)=\{f\in \mathcal{D}:\Psi_0f=0\}$ is the generator of a positive semigroup on $L^1$;
        \item\label{S4} for each nonnegative  $f\in \mathcal{D}$
        \begin{equation}\label{a:lzero}
        \int_E Af(x)\,m(dx)-\int_{E_\partial}\Psi_0f(x)\,m_{\partial}(dx)\le 0.
        \end{equation}
   \end{enumerate}

\begin{theorem}\label{l:Apsi}
Assume conditions \eqref{S1}--\eqref{S4}.
Then the operator  $(A_{\Psi},\mathcal{D}(A_\Psi))$ defined by
\begin{equation}
\label{d:dombou}
A_{\Psi}f=Af, \quad f\in \mathcal{D}(A_{\Psi})=\{f\in \mathcal{D}: \Psi_0(f)=\Psi(f)\},
\end{equation}
is
the generator of a positive  semigroup on $L^1$.
Moreover, the resolvent operator of $A_{\Psi}$ at $\lambda>\omega$ is given by
\begin{equation}\label{eq:resbou}
R(\lambda,A_{\Psi})f=(I+\Psi(\lambda)(I_{\partial}-\Psi \Psi(\lambda))^{-1}\Psi)R(\lambda,A_0)f,\quad f\in L^1.
\end{equation}
\end{theorem}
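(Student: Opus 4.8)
The plan is to first derive the candidate resolvent \eqref{eq:resbou} by solving the abstract resolvent equation $(\lambda I-A)u=f$ together with the perturbed boundary condition $\Psi_0u=\Psi u$. Fixing $\lambda>\max\{0,\omega\}$, I would split a prospective solution as $u=u_0+u_\partial$, where $u_0:=R(\lambda,A_0)f\in\mathcal D(A_0)$ solves $(\lambda I-A)u_0=f$ with $\Psi_0u_0=0$, and $u_\partial\in\Ker(\lambda I-A)$ carries the boundary data. Writing $u_\partial=\Psi(\lambda)g_\partial$ and using the right-inverse property $\Psi_0\Psi(\lambda)=I_\partial$ from \eqref{S1}, the boundary condition $\Psi_0u=\Psi u$ reduces to $(I_\partial-\Psi\Psi(\lambda))g_\partial=\Psi u_0$ in $L^1_\partial$. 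By \eqref{S3} this is solved by $g_\partial=(I_\partial-\Psi\Psi(\lambda))^{-1}\Psi R(\lambda,A_0)f$, which reassembles into exactly \eqref{eq:resbou}.

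Next I would turn this heuristic into a genuine verification that $\lambda\in\rho(A_\Psi)$. Surjectivity of $\lambda I-A_\Psi$ holds by construction: the right-hand side of \eqref{eq:resbou} maps $L^1$ into $\mathcal D(A_\Psi)$ and is a right inverse of $\lambda I-A_\Psi$. For injectivity, if $u\in\Ker(\lambda I-A)$ with $\Psi_0u=\Psi u$, then $u=\Psi(\lambda)\Psi_0u$, because $u-\Psi(\lambda)\Psi_0u\in\mathcal D(A_0)$ lies in $\Ker(\lambda I-A_0)=\{0\}$ (as $\lambda\in\rho(A_0)$); hence $g_\partial:=\Psi_0u$ satisfies $(I_\partial-\Psi\Psi(\lambda))g_\partial=0$, and the invertibility in \eqref{S3} forces $g_\partial=0$ and $u=0$. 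Positivity of $R(\lambda,A_\Psi)$ is then immediate from \eqref{eq:resbou}, since $R(\lambda,A_0)$, $\Psi(\lambda)$, $\Psi$, and $(I_\partial-\Psi\Psi(\lambda))^{-1}$ are all positive by \eqref{S2}, \eqref{S1}, and \eqref{S3}. Thus $A_\Psi$ is a resolvent positive operator on the $AL$-space $L^1$.

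The decisive step is to upgrade resolvent positivity to generation of a positive $C_0$-semigroup, for which I would invoke the positive-perturbation theorem for resolvent positive operators on $AL$-spaces in the form of \cite[Theorem 1.4]{thieme96}, reading \eqref{eq:resbou} as the resolvent of a positive perturbation of $A_0$ through the boundary terms $\Psi(\lambda)$ and $\Psi$. Condition \eqref{a:lzero} supplies the functional estimate required by that theorem: for $0\le u\in\mathcal D(A_\Psi)$ one has $\int_E A_\Psi u\,dm=\int_E Au\,dm\le\int_{E_\partial}\Psi_0u\,dm_\partial=\int_{E_\partial}\Psi u\,dm_\partial$, which—after taking $u=R(\lambda,A_\Psi)f$ and using \eqref{eq:resbou}—controls $\|\lambda R(\lambda,A_\Psi)\|$ on the positive cone and produces the Hille–Yosida bounds with growth governed by $\omega$. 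The lattice structure of $L^1$ reduces these norm estimates to the single scalar inequality \eqref{a:lzero}, and the cited theorem then delivers a positive $C_0$-semigroup whose generator has resolvent \eqref{eq:resbou}; since resolvents determine the operator, this generator coincides with $A_\Psi$.

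The main obstacle I anticipate is precisely this last matching of the boundary perturbation to the hypotheses of \cite[Theorem 1.4]{thieme96}: one must identify the correct positive perturbation so that its associated Neumann/Dyson–Phillips series reproduces \eqref{eq:resbou}, verify that \eqref{a:lzero} yields the admissibility estimate needed for that series to converge strongly to a bounded resolvent, and confirm that the resulting family is strongly continuous with generator exactly $(A_\Psi,\mathcal D(A_\Psi))$ rather than a proper extension. The possible non-density of $\mathcal D(A_\Psi)$ is what makes the non-dense, $AL$-space formulation of the perturbation theorem indispensable here.
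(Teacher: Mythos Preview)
Your derivation of the resolvent formula \eqref{eq:resbou} and the verification that $\lambda\in\rho(A_\Psi)$ with positive resolvent are correct and match the Greiner-type argument underlying the paper. The gap is in the ``decisive step'': your proposed use of condition \eqref{a:lzero} does \emph{not} control $\|\lambda R(\lambda,A_\Psi)\|$. For $0\le f$ and $u=R(\lambda,A_\Psi)f$ you obtain $\lambda\int_E u\,dm=\int_E f\,dm+\int_E A_\Psi u\,dm\le\int_E f\,dm+\int_{E_\partial}\Psi u\,dm_\partial$, and the boundary term $\int_{E_\partial}\Psi u\,dm_\partial$ is nonnegative and, with $\Psi$ unbounded, not dominated by $\|f\|$. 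So no Hille--Yosida bound follows. You also cannot cast $A_\Psi$ as $A_0$ plus a positive additive perturbation on $\mathcal D(A_0)$, because the perturbation lives in the domain, not in the operator; this is exactly the obstacle you flag at the end, and it is the heart of the proof, not a technicality.

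The paper resolves it by lifting to the product $AL$-space $\BS=L^1\times L^1_\partial$ and setting $\mathcal A(f,0)=(Af,-\Psi_0 f)$, $\mathcal B(f,0)=(0,\Psi f)$ on $\mathcal D(\mathcal A)=\mathcal D\times\{0\}$. Now the boundary condition has become an honest additive positive perturbation of a resolvent positive operator with non-dense domain $\overline{\mathcal D(\mathcal A)}=L^1\times\{0\}$. Condition \eqref{a:lzero} is tailor-made for this lift: it says precisely that $\int_E Af\,dm+\int_{E_\partial}(-\Psi_0 f)\,dm_\partial\le 0$ for $f\ge 0$, which yields $\|\lambda R(\lambda,\mathcal A)\|\le 1$ in the product norm (the $-\Psi_0$ piece is what eats the boundary mass). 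One then checks that $\mathcal I-\mathcal B R(\lambda,\mathcal A)$ is invertible with positive inverse exactly when $I_\partial-\Psi\Psi(\lambda)$ is, and \cite[Theorem~1.4]{thieme96} applies to $\mathcal A+\mathcal B$. Its part in $L^1\times\{0\}$ is identified with $(A_\Psi,\mathcal D(A_\Psi))$, which simultaneously gives generation, positivity, and density of the domain. Without this product-space device your outline does not close.
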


\begin{proof}
The space $\BS=L^1\times L^1_{\partial}$ is an $AL$-space  with norm
 \[
\|(f,f_{\partial})\|=\int_E |f(x)|\,m(dx)+
\int_{E_{\partial}}|f_{\partial}(x)|\,m_{\partial}(dx),\quad (f,f_{\partial})\in L^1\times L^1_{\partial}.
\]
We define operators $\mathcal{A},\mathcal{B}\colon \mathcal{D}(\mathcal{A}) \to L^1\times L^1_{\partial}$ with $\mathcal{D}(\mathcal{A})=\mathcal{D} \times \{0\}$  by (see e.g. \cite{rudnickityran17})
\[
\mathcal{A}(f,0)=(Af,-\Psi_{0}f)\quad\text{and}\quad  \mathcal{B}(f,0)=(0,\Psi f)\quad \text{for }
f\in \mathcal{D}.
\]
We have $\mathcal{D}(A_0)\times \{0\}\subset \mathcal{D}(\mathcal{A})\subset L^1\times \{0\}$ and $\mathcal{D}(A_0)$ is dense in $L^1$. Hence,  $\overline{\mathcal{D}(\mathcal{A})}=L^1\times \{0\}$.
For every  $\lambda>0$ the resolvent of the operator $\mathcal{A}$ at $\lambda>0$ is given by
\begin{equation}\label{eq:RA}
R(\lambda,  \mathcal{A})(f,f_{\partial})=(R(\lambda,A_0)f+\Psi(\lambda) f_{\partial},0),\quad (f,f_{\partial})\in L^1\times L^1_{\partial}.
\end{equation}
Thus $(\mathcal{A},\mathcal{D}(\mathcal{A}))$ is resolvent positive, i.e., its resolvent operator $R(\lambda,\mathcal{A})$ is positive for all sufficiently large $\lambda>0$.
We now show that$\|\lambda R(\lambda,  \mathcal{A})\|\le 1$  for all $\lambda >0$.
Since  the operator $\lambda R(\lambda,  \mathcal{A})$ is positive, it is enough to show that
\begin{equation}\label{e:positive}
\|\lambda R(\lambda,  \mathcal{A})(f,f_{\partial})\|\le \|(f,f_{\partial})\|\quad \text{for nonnegative } (f,f_{\partial})\in L^1\times L^1_{\partial}.
\end{equation}
The operator $R(\lambda,A_0)$ is positive,  $R(\lambda,A_0)f\in \mathcal{D}(A_0)\subseteq \mathcal{D}$ and  $ \Psi_0 R(\lambda,A_0)f=0$ for $f\in L^1$. From this and \eqref{a:lzero} we see that
\[
\int_{E} AR(\lambda,A_0)f(x)\,m(dx)\le \int_{E_{\partial}} \Psi_0 R(\lambda,A_0)f(x)\,m_{\partial}(dx)=0
\]
for all nonnegative $f\in L^1$.
We have  $AR(\lambda,A_0)f=\lambda R(\lambda,A_0)f-f$ for all $f\in L^1$, by \eqref{S2}. Thus, we get
\begin{align*}
\int_{E} \lambda R(\lambda,A_0)f(x)\,m(dx)&=\int_{E} AR(\lambda,A_0)f(x)\,m(dx) +\int_E f(x)\,m(dx)\\
&\le \int_E f(x)\,m(dx),\quad f\in L^1, f\ge 0.
\end{align*}
By assumption \eqref{S1}, $A\Psi(\lambda)f_{\partial}=\lambda \Psi(\lambda)f_{\partial}$ and $\Psi_0 \Psi(\lambda)f_{\partial}=f_{\partial}$ for $f_{\partial}\in L^1_{\partial}$. This together with condition \eqref{a:lzero} implies that
\begin{align*}
\int_{E_{\partial}}\lambda \Psi(\lambda)f_{\partial}(x)\,m_{\partial}(dx)&=\int_{E}A \Psi(\lambda)f_{\partial}(x)\,m(dx)\le \int_{E_{\partial}} \Psi_0 \Psi(\lambda)f_{\partial}(x)\,m_{\partial}(dx)\\
&=\int_{E_{\partial}}f_{\partial}(x)\,m_{\partial}(dx)
\end{align*}
for all nonnegative $f_{\partial}\in L_{\partial}^1$, completing the proof of \eqref{e:positive}.

Let $\mathcal{I}$ be the identity operator on $\BS=L^1\times L^1_{\partial}$. We have $\mathcal{B}R(\lambda,\mathcal{A})(f,f_{\partial})=(0, \Psi R(\lambda,A_0)f+\Psi \Psi(\lambda)f_{\partial})$
for any $(f,f_{\partial})$.
Thus, $\mathcal{I}-\mathcal{B}R(\lambda,\mathcal{A})$ is invertible if and only if $ I_{\partial}-\Psi \Psi(\lambda)$ is  invertible. In that case
\[
(\mathcal{I}-\mathcal{B}R(\lambda,\mathcal{A}))^{-1}(f,f_{\partial})=(f,(I_{\partial}-\Psi\Psi(\lambda))^{-1}(\Psi  R(\lambda,A_0)f+f_{\partial})).
\]
Combining this with \eqref{S3} we conclude that $\mathcal{I}-\mathcal{B}R(\lambda,\mathcal{A})$ is invertible with positive inverse $(\mathcal{I}-\mathcal{B}R(\lambda,\mathcal{A}))^{-1}$ for all  $\lambda>\omega$. Hence, the spectral radius of the positive operator $\mathcal{B}R(\lambda,\mathcal{A})$ is strictly smaller than 1 for some $\lambda>\omega$.
It follows from  \cite[Theorem 1.4]{thieme96} that the part of $(\mathcal{A}+\mathcal{B},\mathcal{D}(\mathcal{A}))$ in $\BS_0=\overline{\mathcal{D}(\mathcal{A})}$ denoted by $((\mathcal{A}+\mathcal{B})_{|},\mathcal{D}((\mathcal{A}+\mathcal{B})_{|}))$ generates a positive  semigroup on $\BS_0$. We have
$
\mathcal{D}((\mathcal{A}+\mathcal{B})_{|})=\mathcal{D}(A_{\Psi})\times \{0\}$ and $ (\mathcal{A}+\mathcal{B})_{|}(f,0)=(A_{\Psi}f,0)$, $f\in \mathcal{D}(A_{\Psi}).$   Consequently, the operator $(A_{\Psi},\mathcal{D}(A_{\Psi}))$ is densely defined and generates a positive semigroup on $L^1$.
Finally, the operator $(\mathcal{A}+\mathcal{B},\mathcal{D}(\mathcal{A}))$ is resolvent positive with resolvent given by $
R(\lambda,\mathcal{A}+\mathcal{B})=R(\lambda,\mathcal{A})(\mathcal{I}-\mathcal{B}R(\lambda,\mathcal{A}))^{-1}
$ for $\lambda>\omega$. Hence, the formula for $R(\lambda,A_{\Psi})$ is also valid.
\end{proof}

\begin{remark} Condition \eqref{S4} ensures that the operator $(A_0,\mathcal{D}(A_0))$
satisfies
\begin{equation}\label{e:subsem0}
\int_E A_0f(x)m(dx)\le 0
\end{equation}
for all nonnegative $f\in \mathcal{D}(A_0)$. If, additionally,
\begin{enumerate}[(v)]
\item\label{cv} $(A_0,\mathcal{D}(A_0))$ is densely defined and resolvent positive,
\end{enumerate}
then $(A_0,\mathcal{D}(A_0))$ is the generator of a \emph{substochastic semigroup}  on $L^1 $, i.e., a positive semigroup of contractions on $L^1$. This is a consequence of the Hille-Yosida theorem, see e.g. \cite[Theorem 4.4]{rudnickityran17}. Thus it is enough  to assume condition  (v) instead of \eqref{S2}.
Observe also that \eqref{S2} and \eqref{S4} imply that $(0,\infty)\subseteq \rho(A_0)$.
\end{remark}

\begin{remark} Note that if  $(A_{\Psi},\mathcal{D}(A_{\Psi}))$ is the generator of a positive semigroup and
\begin{equation}\label{eq:APsiz}
\int_E A_{\Psi}f(x)m(dx)=0 \quad \text{for all nonnegative } f\in \mathcal{D}(A_{\Psi}),
\end{equation}
then $(A_{\Psi},\mathcal{D}(A_{\Psi}))$  generates a \emph{stochastic semigroup}, i.e., a  positive semigroup of operators preserving the $L^1$ norm of nonnegative elements (see e.g. \cite[Section 6.2]{banasiakarlotti06} and \cite[Corollary 4.1]{rudnickityran17}).
\end{remark}

\begin{remark}
If we assume that
\begin{enumerate}[(a)]
\item\label{ca}  $(A,\mathcal{D})$ is closed,
\item\label{cb}  $\Psi_0$ is onto and continuous with respect to the graph norm $\|f\|_A=\|f\|+\|Af\|$,
\end{enumerate}
then $\Psi(\lambda)$ exists for each $\lambda>0$ and is bounded, by
\cite[Lemma 1.2]{greiner}. If $\Psi_0$ is positive, then  $\Psi(\lambda)$ is positive. Thus  condition \eqref{S1} can be replaced by conditions \eqref{ca} and \eqref{cb}.
\end{remark}

\begin{remark}
Greiner~\cite[Theorem 2.1]{greiner} establishes that $(A_{\Psi},\mathcal{D}(A_{\Psi}))$ is the generator of a $C_0$-semigroup for any bounded $\Psi$ provided that conditions  \eqref{ca} and \eqref{cb} hold true, $(A_0,\mathcal{D}(A_0))$ is the generator of a $C_0$-semigroup, and that there exist constants $\gamma>0$ and  $\lambda_0$ such that
\begin{equation}\label{c:grei}
\|\Psi_0 f\|\ge \lambda \gamma\|f\|, \quad f\in \Ker(\lambda I-A),\lambda > \lambda_0.
\end{equation}
This is condition (2.1) of Greiner~\cite[Theorem 2.1]{greiner}. Some extensions of this result are provided in \cite{nickel04} and \cite{rhandi15} for unbounded $\Psi$, as well as in \cite{adler2014perturbation,adler2017perturbation}.
\end{remark}

\begin{remark} Recall that a positive operator on an AL-space defined everywhere is automatically bounded. Thus our assumption \eqref{S1} implies that $\Psi(\lambda)$ is bounded for each $\lambda>0$. Moreover, its norm is determined through its values on the positive cone.  From assumptions \eqref{S1} and \eqref{S4} it follows that $\lambda \|\Psi(\lambda)\|\le 1$ for each $\lambda>0$, as was shown in the proof of Theorem~\ref{l:Apsi}. Thus, for $f=\Psi(\lambda)f_{\partial}$, we get \eqref{c:grei} with $\gamma=1$.
Now suppose, as in \cite{greiner}, that  $\Psi$ is bounded. Then  $\|\Psi\Psi(\lambda)\|\le \|\Psi\|/\lambda$ for all $\lambda>0$. Hence, the operator $I_{\partial}-\Psi\Psi(\lambda)$ is invertible for $\lambda> \|\Psi\|$. Since $I-\Psi(\lambda)\Psi$ is also invertible, we have  $(I-\Psi(\lambda)\Psi)^{-1}=I+\Psi(\lambda)(I_{\partial}-\Psi\Psi(\lambda))^{-1}\Psi$ and, by~\eqref{eq:resbou},
\[
R(\lambda,A_{\Psi})=(I-\Psi(\lambda)\Psi)^{-1}R(\lambda,A_0).
\]
Consequently, if $\Psi$ is bounded and positive, then we get the same result as  in~\cite{greiner}.
\end{remark}

We now look at a simple example where Theorem~\ref{l:Apsi} can be easily applied and it should be compared with \cite[Corollary 25]{adler2014perturbation}.
\begin{example}
Consider the space $L^1=L^1[0,1]$ and the first derivative operator $A= \frac{d}{dx}$ with domain $\mathcal{D}=W^{1,1}[0,1]$. Let $E_{\partial}$ be the one point set $\{1\}$ and $m_{\partial}$ be the point measure $\delta_1$ at $1$, so that  the boundary space is $L^1_{\partial}=\{f_{\partial}\colon \{1\}\to \mathbb{R}:f_{\partial}(1)\in \mathbb{R}\} $ and can be identified  with $\mathbb{R}$, by writing $f_{\partial}=f_{\partial}(1)$. Let  the boundary operators $\Psi_0$ and $\Psi$ be defined by
\[
\Psi_0f=f(1)\quad\text{and} \quad\Psi f =\int_{[0,1]}f(x)\mu(dx), \quad f\in W^{1,1}[0,1],
\]
where $\mu$ is a finite Borel measure.
Note that for each $\lambda>0$ and $f\in \Ker(\lambda I-A)$ we have $f'=\lambda f$. Thus $f'$ is a continuous function. Consequently, for each $f_{\partial}\in L^1_{\partial}$ and $\lambda>0$, the solution $f=\Psi(\lambda)f_\partial$ of equation $f'=\lambda f$ satisfying $\Psi_0(\lambda)f=f_{\partial}$ is of the form
\[
\Psi(\lambda)f_\partial(x)=e^{\lambda (x-1)}f_\partial,\quad x\in [0,1].
\]
Hence condition \eqref{S1} holds true.
We have
\[
\int_{[0,1]} Af(x)dx=f(1)-f(0),\quad f\in W^{1,1}[0,1],
\]
and the restriction $A_0$ of the operator $A$ to
\[
\mathcal{D}(A_0)=\{f\in W^{1,1}[0,1]: f(1)=0\}
\]
is the generator of a positive semigroup. Thus conditions \eqref{S2} and \eqref{S4} hold true.
If there exists $\lambda>0$ such that
\begin{equation}\label{e:mu}
\int_{[0,1]} e^{\lambda (x-1)}\mu(dx)<1,
\end{equation}
then  condition \eqref{S3} holds true and the operator $A_\Psi\subseteq \frac{d}{dx}$ with domain
\[
\mathcal{D}(A_{\Psi})=\{f\in W^{1,1}[0,1]: f(1)=\int_{[0,1]}f(x)\mu(dx)\}
\]
is the generator of a positive semigroup, by Theorem~\ref{l:Apsi}.
Now suppose that $\mu$ is a~probability measure, so that $\mu([0,1])=1$. Then
\[
\int_{[0,1]} e^{\lambda (x-1)}\mu(dx)\le 1
\]
for all $\lambda>0$. Thus if \eqref{e:mu} does not hold for any $\lambda>0$ then $e^{\lambda (x-1)}=1$ for all $\lambda>0$ and $\mu$-almost every $x\in [0,1]$ implying that $\mu\{x\in [0,1]: x=1\}=1$.  Consequently, if $\mu$ is a probability measure such that $\mu\neq \delta_1$ then $(A_\Psi,\mathcal{D}(A_\Psi))$  is the generator of a positive semigroup.
\end{example}

It should  be noted that in \cite[Theorem 4.6]{rudnickityran17}  the assumption that the domain $\mathcal{D}(A_{\Psi})$ of the operator $A_{\Psi}$ is dense is  missing. Making use of Theorem~\ref{l:Apsi}, we get the following result.

\begin{theorem}
Assume conditions \eqref{S1}--\eqref{S4}. If $B$ is a bounded positive operator such that
\[
\int_{E}(A_{\Psi}f(x)+Bf(x))m(dx)\le 0 \quad \text{for all nonnegative }f\in \mathcal{D}(A_{\Psi}),
\]
then $(A_{\Psi}+B,\mathcal{D}(A_{\Psi}))$ is the generator of a substochastic semigroup.
\end{theorem}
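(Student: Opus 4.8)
The plan is to recognize $C:=A_{\Psi}+B$ with $\mathcal{D}(C)=\mathcal{D}(A_{\Psi})$ as an operator to which the substochastic generation principle already invoked in the first remark applies, namely the Hille--Yosida result \cite[Theorem 4.4]{rudnickityran17} stating that a densely defined, resolvent positive operator on $L^1$ whose integral over $E$ is nonpositive on the positive cone of its domain generates a substochastic semigroup. Accordingly I would verify three things for $C$: that it is densely defined, that it is resolvent positive, and that $\int_E Cf(x)\,m(dx)\le 0$ for every nonnegative $f\in\mathcal{D}(C)$.

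The first and third of these are immediate. By Theorem~\ref{l:Apsi} the operator $A_{\Psi}$ generates a positive semigroup, so its domain $\mathcal{D}(A_{\Psi})=\mathcal{D}(C)$ is dense in $L^1$. The integral inequality is precisely the hypothesis, since $Cf=A_{\Psi}f+Bf$.

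The one genuinely new verification is resolvent positivity of $C$, and here I would exploit that $A_{\Psi}$ is itself resolvent positive (again by Theorem~\ref{l:Apsi}) together with the boundedness and positivity of $B$. Because $A_{\Psi}$ generates a $C_0$-semigroup, $\|R(\lambda,A_{\Psi})\|\to 0$ as $\lambda\to\infty$, so $\|BR(\lambda,A_{\Psi})\|<1$ for all sufficiently large $\lambda$; for such $\lambda$ one has $\lambda\in\rho(C)$ and
\[
R(\lambda,C)=R(\lambda,A_{\Psi})\sum_{n=0}^{\infty}\bigl(BR(\lambda,A_{\Psi})\bigr)^{n},
\]
a norm-convergent series each of whose terms is positive, since $R(\lambda,A_{\Psi})\ge 0$ and $B\ge 0$. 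Hence $R(\lambda,C)\ge 0$ for all large $\lambda$, which is exactly resolvent positivity.

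With these three facts the conclusion follows from the cited principle, whose mechanism I would recall for completeness: for nonnegative $f\in L^1$ and large $\lambda$ the element $g=R(\lambda,C)f$ is nonnegative and lies in $\mathcal{D}(C)$, so $\int_E Cg(x)\,m(dx)\le 0$; writing $Cg=\lambda g-f$ this gives $\lambda\int_E R(\lambda,C)f(x)\,m(dx)\le\int_E f(x)\,m(dx)$, that is $\|\lambda R(\lambda,C)f\|\le\|f\|$. As $\lambda R(\lambda,C)$ is positive and the space is an $AL$-space, its norm is attained on the positive cone, so $\|\lambda R(\lambda,C)\|\le 1$, and the Hille--Yosida theorem yields that $C=A_{\Psi}+B$ generates a substochastic semigroup. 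I expect the only point requiring care to be the resolvent positivity step, where one must first secure $\|BR(\lambda,A_{\Psi})\|<1$ for large $\lambda$ before expanding the Neumann series; the remaining estimate is a direct transcription of the computation already carried out for $\lambda R(\lambda,\mathcal{A})$ in the proof of Theorem~\ref{l:Apsi}.
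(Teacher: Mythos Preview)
Your proposal is correct and matches the approach the paper intends: the paper gives no explicit proof, merely prefacing the statement with ``Making use of Theorem~\ref{l:Apsi}, we get the following result,'' so the implicit argument is precisely what you wrote out---use Theorem~\ref{l:Apsi} to secure that $A_\Psi$ is densely defined and resolvent positive, perturb by the bounded positive $B$ via the Neumann series to retain resolvent positivity, and then invoke the substochastic generation criterion from Remark~1 (i.e.\ \cite[Theorem 4.4]{rudnickityran17}). Your Neumann-series computation and the closing $\|\lambda R(\lambda,C)\|\le 1$ estimate are the standard details the paper omits.
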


We conclude this section with a result concerning the existence of steady states of the positive semigroup from Theorem~\ref{l:Apsi}. Note that given any $\lambda,\mu\in\rho(A_0)$ we have
$
\Psi(\lambda)=\Psi(\mu)+(\mu-\lambda)R(\lambda,A_0)\Psi(\mu)
$, see~\cite[Lemma 1.3]{greiner}.
Thus $\Psi(\lambda)\ge \Psi(\mu)$ for $\lambda\le \mu$. Consequently, for each nonnegative $f_{\partial}\in L^1_{\partial}$  the pointwise limit 
\begin{equation}\label{d:P0}
\Psi(0)f_{\partial}=\lim_{\lambda \to 0^+}\Psi(\lambda)f_{\partial}
\end{equation}
exists and $\Psi(0)f_{\partial}$ is nonnegative.

\begin{theorem}\label{t:mainex} Assume conditions \eqref{S1}--\eqref{S4}. Let $\Psi(0)$ be as in \eqref{d:P0}.
If a nonnegative $f_{\partial}\in L^1_{\partial}$ satisfies $\Psi(0)f_{\partial}\in L^1$  and $f_{\partial}=\Psi\Psi(0)f_{\partial}$,
then $\Psi(0)f_{\partial}\in \mathcal{D}(A_\Psi)$ and $A_\Psi\Psi(0)f_{\partial}=0$.
Conversely, if $A_\Psi f=0$ for a nonnegative $f\in \mathcal{D}(A_\Psi)$ then $f_{\partial}=\Psi f$ satisfies $\Psi\Psi(\lambda)f_{\partial}\le f_{\partial}$ for all $\lambda >\max\{0,\omega\}$, where $\omega$ is as in \eqref{S3}.
\end{theorem}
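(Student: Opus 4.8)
The plan is to reduce both implications to a single resolvent decomposition that writes the relevant element---the candidate steady state $g=\Psi(0)f_{\partial}$ in one direction, an existing kernel element $f$ in the other---as a sum of a term in $\Ker(\lambda I-A)$ and a term in $\mathcal{D}(A_0)$. This is what lets one read off membership in $\mathcal{D}$ and the action of $A$ term by term, so that no closedness of $(A,\mathcal{D})$ nor continuity of the unbounded boundary operators is ever invoked.

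For the forward implication I would set $g=\Psi(0)f_{\partial}\in L^1$. Since $\Psi(\lambda)\ge\Psi(\mu)$ for $\lambda\le\mu$, the quantity $\Psi(\mu)f_{\partial}$ increases to $g$ pointwise as $\mu\to 0^+$, and the hypothesis $g\in L^1$ upgrades this to convergence in $L^1$ by monotone convergence. I then pass to the limit $\mu\to 0^+$ in Greiner's identity $\Psi(\lambda)=\Psi(\mu)+(\mu-\lambda)R(\lambda,A_0)\Psi(\mu)$, using boundedness of $R(\lambda,A_0)$, to obtain
\[
\Psi(\lambda)f_{\partial}=g-\lambda R(\lambda,A_0)g,\qquad\text{i.e.}\qquad g=\Psi(\lambda)f_{\partial}+\lambda R(\lambda,A_0)g.
\]
The right-hand side lies in $\mathcal{D}$ since $\Psi(\lambda)f_{\partial}\in\Ker(\lambda I-A)\subseteq\mathcal{D}$ and $R(\lambda,A_0)g\in\mathcal{D}(A_0)\subseteq\mathcal{D}$. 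Applying $A$, and using $A\Psi(\lambda)f_{\partial}=\lambda\Psi(\lambda)f_{\partial}$ together with $AR(\lambda,A_0)g=\lambda R(\lambda,A_0)g-g$, the $\lambda$-terms cancel to give $Ag=0$. Applying $\Psi_0$, and using $\Psi_0\Psi(\lambda)f_{\partial}=f_{\partial}$ and $\Psi_0 R(\lambda,A_0)g=0$, gives $\Psi_0 g=f_{\partial}=\Psi g$, the last equality being the standing hypothesis $f_{\partial}=\Psi\Psi(0)f_{\partial}$. Hence $g\in\mathcal{D}(A_{\Psi})$ and $A_{\Psi}g=Ag=0$.

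For the converse I start from $f\ge 0$, $f\in\mathcal{D}(A_{\Psi})$, $A_{\Psi}f=Af=0$, and set $f_{\partial}=\Psi f=\Psi_0 f$. The element $f-\Psi(\lambda)f_{\partial}$ lies in $\mathcal{D}(A_0)$: its image under $\Psi_0$ vanishes because $\Psi_0\Psi(\lambda)f_{\partial}=f_{\partial}=\Psi_0 f$, while $(\lambda I-A)(f-\Psi(\lambda)f_{\partial})=\lambda f$ since $Af=0$ and $\Psi(\lambda)f_{\partial}\in\Ker(\lambda I-A)$. Inverting yields the exact analogue $f=\Psi(\lambda)f_{\partial}+\lambda R(\lambda,A_0)f$, and applying $\Psi$ together with positivity of $\Psi$, $R(\lambda,A_0)$ and $f$ gives $f_{\partial}=\Psi\Psi(\lambda)f_{\partial}+\lambda\,\Psi R(\lambda,A_0)f\ge\Psi\Psi(\lambda)f_{\partial}$ for every $\lambda>0$, which is the asserted inequality on $\lambda>\max\{0,\omega\}$.

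The one genuinely delicate point is the opening step of the forward direction: promoting the pointwise monotone limit $\Psi(\mu)f_{\partial}\to g$ to an $L^1$-limit and then commuting it with the bounded operator $R(\lambda,A_0)$ in the resolvent identity. This is precisely where the integrability hypothesis $\Psi(0)f_{\partial}\in L^1$ is indispensable, since without it the candidate steady state need not be an $L^1$ element at all. Everything after that is algebra inside $\mathcal{D}$, and because both $g$ and $f$ are exhibited as explicit sums of a kernel element and a resolvent image, the argument never needs $A$, $\Psi_0$, or $\Psi$ to be closed or continuous.
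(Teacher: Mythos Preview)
Your proof is correct, and it takes a genuinely different route from the paper's.

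For the forward implication, the paper argues by passing to the product space $L^1\times L^1_\partial$ and invoking the closedness of the operator $(\mathcal{A}+\mathcal{B})(f,0)=(Af,\Psi f-\Psi_0 f)$, which was established in the proof of Theorem~\ref{l:Apsi} as a byproduct of resolvent positivity; one then shows $(\mathcal{A}+\mathcal{B})(\Psi(\lambda)f_\partial,0)\to(0,0)$ and concludes. You instead pass $\mu\to 0$ in Greiner's identity to obtain the explicit decomposition $g=\Psi(\lambda)f_\partial+\lambda R(\lambda,A_0)g$, which places $g$ in $\mathcal{D}$ directly and lets you read off $Ag$ and $\Psi_0 g$ term by term. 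This is more elementary: it needs only boundedness of $R(\lambda,A_0)$ and never appeals to closedness of any operator, and it has the pleasant side effect of showing that $\Psi(0)f_\partial\in\mathcal{D}$ already follows from $\Psi(0)f_\partial\in L^1$ alone, so the hypothesis $f_\partial=\Psi\Psi(0)f_\partial$ is unambiguously meaningful.

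For the converse, the paper works through the resolvent formula~\eqref{eq:resbou} for $A_\Psi$, which brings in $(I_\partial-\Psi\Psi(\lambda))^{-1}$ and hence the restriction $\lambda>\omega$. Your decomposition $f=\Psi(\lambda)f_\partial+\lambda R(\lambda,A_0)f$ followed by an application of $\Psi$ bypasses this machinery entirely and in fact yields $\Psi\Psi(\lambda)f_\partial\le f_\partial$ for every $\lambda>0$, slightly more than the theorem states. The paper's route has the advantage of exhibiting $\Psi f$ in closed form as $(I_\partial-\Psi\Psi(\lambda))^{-1}\Psi R(\lambda,A_0)(\lambda f)$, which may be useful elsewhere; yours is shorter and uses less.
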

\begin{proof}
It follows from condition \eqref{S1} that  $\Psi(\lambda)f_{\partial}\in\mathcal{D}$,  $\Psi_0\Psi(\lambda)f_{\partial}=f_{\partial}$,  and $A\Psi(\lambda)f_{\partial}=\lambda f_{\partial}$ for all $\lambda>0$. We have $\Psi(\lambda)f_{\partial}\to \Psi(0)f_{\partial}$ in $L^1$,  as $\lambda\to 0$. Thus $A\Psi(\lambda)f_{\partial}\to 0$  in $L^1$, as $\lambda\to0$. Recall from the proof of Theorem~\ref{l:Apsi} that the operator $(\mathcal{A}+\mathcal{B})(f,0)=(Af,\Psi f-\Psi_0 f)$, $f\in \mathcal{D}$, is a closed operator in the space $L^1\times L^1_{\partial}$.  The operators $\Psi$ and $\Psi_0$ are positive and we have $\Psi\Psi(\lambda)f_{\partial}\to \Psi\Psi(0)f_{\partial}=f_{\partial}=\Psi_0\Psi(0)f_{\partial}$.  Thus, $(\mathcal{A}+\mathcal{B})(\Psi(\lambda)f_{\partial},0)\to (0,0)$ as $\lambda \to 0$.
This implies that $\Psi(0)f_{\partial}\in \mathcal{D}(A_{\Psi})$ and $A_{\Psi}\Psi(0)f_{\partial}=0$.

For the converse, suppose  that $f\in \mathcal{D}(A_\Psi)$ and $A_{\Psi}f=0$. We have $R(\lambda,A_\Psi)(\lambda f-A_\Psi f)=0$. Thus $\lambda R(\lambda,A_\Psi)f=f$ and  $\Psi f=\Psi R(\lambda,A_\Psi)(\lambda f)=\Psi R(\lambda,A_0)(\lambda f)+\Psi \Psi(\lambda) (I_{\partial}-\Psi\Psi(\lambda))^{-1}\Psi R(\lambda,A_0)(\lambda f)$, by \eqref{eq:resbou}. Since $$\Psi R(\lambda,A_0)(\lambda f)=(I_{\partial}-\Psi\Psi(\lambda))(I_{\partial}-\Psi\Psi(\lambda))^{-1}\Psi R(\lambda,A_0)(\lambda f),$$ we conclude that $\Psi f=(I_{\partial}-\Psi\Psi(\lambda))^{-1}\Psi R(\lambda,A_0)(\lambda f)$.
This implies that $(I_{\partial}-\Psi\Psi(\lambda))\Psi f=\Psi R(\lambda,A_0)(\lambda f)\ge 0$ for $\lambda >\max\{0,\omega\}$ and completes the proof.
\end{proof}

\section{A model of a two phase cell cycle in a single cell line}\label{s:cell}

The cell cycle is the period from cell birth to its division into daughter cells. It  contains four major phases: $G_1$ phase (cell growth before DNA replicates), $S$ phase (DNA synthesis and replication), $G_2$ phase (post DNA replication growth period), and    $M$ (mitotic) phase (period of cell division).  The Smith--Martin model \cite{SM73} divides the cell cycle into two phases: $A$ and $B$. The $A$ phase corresponds to all or part of $G_1$ phase of the cell cycle and has a variable duration, while the $B$ phase covers  the rest of the cell cycle.
The cell enters the phase $A$ after birth and waits for some random time $T_A$ until a critical event occurs that is necessary for
cell division. Then the cell enters the phase $B$  which lasts for a finite fixed time $T_B$.
At the end of the $B$-phase the cell splits into two daughter cells.
We assume that individual states of the cell are characterized by age $a\ge 0$ in each phase and by size $x> 0$, which can be volume, mass, DNA content or any quantity conserved trough division.
We assume that individual cells of size $x$  increase their size   over time in the same way, with growth rate $g(x)$ so that $dx/dt=g(x)$, and all cells age over time with unitary velocity so that  $da/dt=1$.
We assume that the probability that a cell is still being in the phase $A$ at age $a$  is equal to $\Ha(a)$, so the rate of exit from the phase $A$ at age $a$ is $\rho(a)$ given by
\begin{equation}\label{d:H}
\rho(a)=-\frac{\Ha'(a)}{\Ha(a)}, \quad  \Ha(a)=\int_a^\infty \psi(r)dr
,
\end{equation}
where $\psi$ is a probability density function defined on $[0,\infty)$, describing the distribution of the time $T_A$, the duration of the phase $A$.
We make the following assumptions:
\begin{enumerate}[(I)]
\item\label{a:Asb} The function $\psi$ in \eqref{d:H} is a probability density function so that  $\psi  [0,\infty)\to [0,\infty)$ is Borel measurable and the function $\Ha$ in \eqref{d:H} satisfies: $\Ha(0)=1$, $\Ha(\infty)=0$.
  \item\label{a:Asa}
The growth rate function $g\colon (0,\infty)\to (0,\infty)$ is globally Lipschitz continuous and  $g(x)>0$ for $x>0$.
\end{enumerate}
The Smith and Martin hypothesis (\cite{SM73}) states that $\psi$ is exponentially distributed with parameter $p>0$, so that $\rho(a)=p$ for all $a>0$.
However, this does not agree with experimental data, see e.g. \cite{golubev2016applications,yates2017multi} for recent results. The generation time of a cell, i.e. the time from birth to division, can be written as $T=T_{A}+T_B$. Thus the distribution of the generation time has a probability density of the form
\[
\psi_T(t)=\left\{
            \begin{array}{ll}
              0, & t<T_B \\
              \psi(t-T_B), & t\ge T_B.
            \end{array}
          \right.
\]
Cell generation times can have lognormal or bimodal distribution (see \cite{sherer2008identification}), exponentially modified Gaussian (\cite{golubev2010exponentially}), or tempered stable distributions (\cite{palmer08}).

To describe the growth of cells we define
\begin{equation}\label{d:Q}
\sQ(x):=\int_{\bar{x}}^x\frac{1}{g(r)}dr ,\quad x>0,
\end{equation}
where $\bar{x}> 0$ or $\bar{x}=0$, if the integral is finite. The value $\sQ(x)$ has a simple biological interpretation. If $\bar{x}$ is the size of a cell, then $\sQ(x)$ is the time it takes the cell to reach the size $x$. It follows from assumption \eqref{a:Asa} that the function  $\sQ$ is strictly increasing and continuous. We denote by $\sQ^{-1}$ the inverse of $\sQ$. Define
\begin{equation}\label{d:pit}
\pi_tx_0=\sQ^{-1}(\sQ(x_0)+t)
\end{equation}
for $t\ge 0$ and $x_0>0$.
Then $\pi_tx_0$ satisfies the initial value problem $$x'(t)=g(x(t)), \quad x(0)=x_0>0.$$
If $\sQ(0)=-\infty$ then $\sQ^{-1}$ is defined on $\mathbb{R}$. Hence, formula \eqref{d:pit} extends to all $t\in \mathbb{R}$ and  $x_0>0$. We also set $\pi_t0=0$ for $t>0$ in this case.
If $\sQ(0)=0$ then $\sQ^{-1}$ is defined only on $(0,\infty)$ and we set $\pi_t0=\sQ^{-1}(t)$ for $t>0$.  We can extend formula \eqref{d:pit} to all negative $t$ satisfying $\sQ(x_0)+t>0$; otherwise we set $\pi_{t}x_0=0$.
Note that at time $t=T$, the generation time, a ``mother cell'' of size $\pi_{T}x_0$ divides into two daughter cells of equal size $\frac12\pi_{T}x_0$.

In the probabilistic model of \cite{hannsgen85a,tyson85,al-mcm-jt-92,tyrcha01}  a sequence of consecutive descendants of a single cell was studied. Let $f$ be the probability density function of the size distribution at birth at time $t_0$ of mother cells and let $t_1>t_0$ be a random time of birth of daughter
cells. Then  the probability density function of the size distribution of daughter cells is given by (\cite{al-mcm-jt-92,tyrcha01})
\begin{equation}\label{op:cc1} P f(x)=-\int _{0}^{\lambda (x)} \dfrac
{\partial}{\partial x} \Ha(\sQ (\lambda(x))-\sQ (r))
    f(r)\, dr,
\end{equation}
where
\begin{equation*}
\lambda(x)=\max\{\pi_{-T_B}(2x), 0\}=\max\{\sQ^{-1}(\sQ(2x)-T_B),0\}.
\end{equation*}
The iterates $P^2f, P^3f,\ldots$ denote densities of the size distribution of consecutive descendants born at random times $t_2, t_3,\ldots.$
The operator $P$ defined by \eqref{op:cc1} is  a positive contraction on $L^1(0,\infty)$, the space of Borel measurable functions defined on $(0,\infty)$ and integrable with respect to the Lebesgue measure.
Here we extend the probabilistic model to a continuous time situation by examining what happens at all times $t$ and not only at $t_0,t_1,t_2,\ldots$.

We denote by $p_1(t,a,x)$ and $p_2(t,a,x)$ the densities of the age and size distribution of  cell in the $A$-phase and in the $B$-phase at time $t$, age $a$, and size $x$, respectively. Neglecting cell deaths the equations can be written as
\begin{equation}\label{a:eq}
\begin{split}
\frac{\partial p_1(t,a,x)}{\partial t}+\frac{\partial p_1(t,a,x)}{\partial
a}+\frac{\partial (g(x)p_1(t,a,x))}{\partial
x}&=-\rho(a)p_1(t,a,x),\\
\frac{\partial p_2(t,a,x)}{\partial t}+\frac{\partial p_2(t,a,x)}{\partial
a}+\frac{\partial (g(x)p_2(t,a,x))}{\partial x}&=0,
\end{split}
\end{equation}
with boundary  and initial conditions
\begin{align}\label{c:bc}
p_1(t,0,x)&=2p_2(t,T_B,2x),\quad x>0, t>0,\\
\label{c:bc2}p_2(t,0,x)&=\int_0^\infty \rho(a)p_1(t,a,x)da, \quad x>0, t>0,\\
p_1(0,a,x)&=f_1(a,x),\quad p_2(0,a,x)=f_2(a,x).\label{c:ic}
\end{align}
In this model,  cells in the A-phase enter the B-phase at rate $\rho$. This is taken into account by the boundary condition \eqref{c:bc2}.  All cells  stay in the B-phase until they reach the age $T_B$. Then they divide their size into half \eqref{c:bc}.
The model is complemented with initial conditions \eqref{c:ic}.
The model we propose is different as compared to mass/maturity structured models \cite{diekmann84,hannsgen85,TH-cc,pichorrudnicki2018} where a cell leaves the phase $A$  with intensity being dependent on maturity, not age.  In the case of $T_B=0$ there is only one phase present; a maturity structured model being a continuous time extension of~\cite{LM-cc} is studied  in \cite{mackeytyran08}, while age and volume/maturity structured population models of growth and division were studied extensively since the seminal work of \cite{bell1967cell} and  \cite{rubinow68,lebowitzrubinow74}. We refer the reader to \cite{metz86diekmann} for  historical remarks concerning modeling of age structured populations and to \cite{sherer2008identification,webb} for recent reviews.

We look for positive solutions of \eqref{a:eq}--\eqref{c:ic} in
the space $L^1=L^1(E,\ES,m)$ with 
 $E=E_1\times \{1\}\cup E_2\times \{2\}$, where
\[
E_1=\{(a,x)\in (0,\infty)\times (0,\infty): x>\pi_a 0\}
\]
and
\[
E_2=\{ (a,x)\in (0,T_B)\times (0,\infty): x>\pi_a 0\},
\]
$m$ is the product of the two-dimensional Lebesgue measure and the counting measure on $\{1,2\}$, and $\ES$ is the $\sigma$-algebra of all Borel subsets of $E$. We identify $L^1=L^1(E,\ES,m)$ with the product of the spaces $L^1(E_1)$ and $L^1(E_2)$ of functions defined on the sets $E_1$ and $E_2$, respectively, and being integrable with respect to the two-dimensional Lebesgue measure.
We say that the operator $P$ has a steady state in $L^1(0,\infty)$ if there exists a probability density function $f$ such that $Pf=f$. Similarly, a semigroup $\{S(t)\}_{t\ge 0}$ has a steady state in $L^1$ if there exists a nonnegative $f\in L^1$ such that $S(t)f=f$ for all $t>0$ and $\|f\|_1=1$ where $\|\cdot\|_1$ is the norm in $L^1$.

\begin{theorem}\label{t:main}
Assume conditions \eqref{a:Asb} and \eqref{a:Asa}.
There exists a unique positive semigroup $\{S(t)\}_{t\ge 0}$ on $L^1$ which provides solutions of \eqref{a:eq}--\eqref{c:ic} and $\{S(t)\}_{t\ge 0}$ is stochastic.  If $\Ha\in L^1(0,\infty)$ then the semigroup $\{S(t)\}_{t\ge 0}$ has a steady state in $L^1$ if and only if the operator $P$ in \eqref{op:cc1} has a steady state in $L^1(0,\infty)$.
\end{theorem}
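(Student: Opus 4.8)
The plan is to cast the two-phase PDE system \eqref{a:eq}--\eqref{c:ic} into the abstract framework of Theorem~\ref{l:Apsi} and then invoke both Theorem~\ref{l:Apsi} (for existence, uniqueness, and the stochastic property) and Theorem~\ref{t:mainex} (for the steady-state correspondence). First I would identify the operator $A$ as the transport operator acting on pairs $(p_1,p_2)$ by $Ap_i = -\partial_a p_i - \partial_x(g p_i) - \rho_i p_i$, where $\rho_1=\rho$ and $\rho_2=0$, on a suitable domain $\mathcal{D}$ of Sobolev-type functions on $E_1\cup E_2$ admitting traces on the relevant pieces of the boundary. The boundary space $L^1_\partial$ must encode the inflow boundaries: for the $A$-phase the inflow is the set $\{a=0\}$ together with the characteristic boundary where $x=\pi_a 0$, and similarly for the $B$-phase the inflow is $\{a=0\}$; the outflow boundary carrying the division data is $\{a=T_B\}$ in the $B$-phase. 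I would set $\Psi_0(p_1,p_2)$ to be the trace on the inflow boundaries $(p_1(0,\cdot),p_2(0,\cdot))$ and define $\Psi$ to be the positive operator implementing the two biological boundary conditions \eqref{c:bc}--\eqref{c:bc2}, namely feeding $2p_2(\cdot,T_B,2x)$ back into $p_1(\cdot,0,x)$ and $\int_0^\infty \rho(a)p_1(\cdot,a,x)\,da$ back into $p_2(\cdot,0,x)$.

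The core verification is checking conditions \eqref{S1}--\eqref{S4} for this choice. For \eqref{S2} the operator $A_0$ with vanishing inflow trace is a transport operator whose positive ($C_0$-)semigroup is built by the method of characteristics using the flow $\pi_t$ from \eqref{d:pit}; positivity is immediate since the semigroup transports mass forward and multiplies by the nonnegative factor $\Ha(a)/\Ha(a-t)$ coming from the death rate $\rho$. For \eqref{S1} I would solve the stationary resolvent equation $\lambda f - Af = 0$ explicitly along characteristics and read off $\Psi(\lambda)$ as the positive solution operator that prescribes the inflow data; here the explicit form of the kernel $\Ha(\sQ(\cdot)-\sQ(\cdot))$ appearing in \eqref{op:cc1} should emerge naturally. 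Condition \eqref{S4} is the mass-balance (divergence) identity: integrating $Af$ over $E$ produces boundary flux terms which should exactly cancel the $\Psi_0$ integral, and crucially equality should hold (giving \eqref{eq:APsiz}) because no mass is lost in this death-free model once division doubling is accounted for — this is precisely what forces the semigroup to be \emph{stochastic} rather than merely substochastic.

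The \textbf{main obstacle} I anticipate is condition \eqref{S3}: establishing that $I_\partial - \Psi\Psi(\lambda)$ is invertible with positive inverse for large $\lambda$. The composition $\Psi\Psi(\lambda)$ reduces the boundary-to-boundary map along characteristics and, after the change of variables tying age to size through $\sQ$, it should coincide with a $\lambda$-dependent analogue of the discrete division operator $P$ from \eqref{op:cc1}; its norm involves the factor coming from $\Ha$ and the doubling $x\mapsto 2x$. I would show $\|\Psi\Psi(\lambda)\|<1$ for $\lambda$ large (so that the Neumann series converges and the inverse is positive), exploiting that large $\lambda$ penalizes the time spent traversing the $B$-phase of fixed length $T_B$ via an $e^{-\lambda T_B}$-type factor. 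Combining \eqref{S1}--\eqref{S4}, Theorem~\ref{l:Apsi} yields a unique positive semigroup $\{S(t)\}$, and the mass-balance equality upgrades it to stochastic. For the final equivalence, I would apply Theorem~\ref{t:mainex}: a steady state of $\{S(t)\}$ corresponds to a nonnegative fixed point $f_\partial = \Psi\Psi(0)f_\partial$ on the boundary, and the limit operator $\Psi\Psi(0)$ should be identified with the operator $P$ of \eqref{op:cc1} (the $\lambda\to 0^+$ limit removing the $e^{-\lambda T_B}$ factor and recovering the probabilistic division kernel). The integrability hypothesis $\Ha\in L^1(0,\infty)$ guarantees that the candidate steady state $\Psi(0)f_\partial$ lies in $L^1$, so that the fixed-point problems on the boundary and in $L^1$ translate into one another, giving the stated biconditional.
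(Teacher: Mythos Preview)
Your plan is essentially the paper's proof: cast \eqref{a:eq}--\eqref{c:ic} into the $(A,\Psi_0,\Psi)$ framework of Section~\ref{s:pre}, verify \eqref{S1}--\eqref{S4} via explicit characteristics (the paper obtains $\|\Psi\Psi(\lambda)\|\le\max\{e^{-\lambda T_B},\int_0^\infty\psi(a) e^{-\lambda a}\,da\}<1$ for every $\lambda>0$, exactly by your mechanism), read off stochasticity from the mass identity \eqref{eq:APsiz}, and reduce the steady-state question to the boundary fixed-point equation $f_\partial=\Psi\Psi(0)f_\partial$ via Theorem~\ref{t:mainex}, whose first component is shown to be $f_{\partial,1}=Pf_{\partial,1}$. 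The one step your sketch underplays is the \emph{converse} of the steady-state biconditional: Theorem~\ref{t:mainex} delivers only the inequality $\Psi\Psi(\lambda)f_\partial\le f_\partial$, and the paper upgrades this to equality (hence to a genuine fixed point of $P$) by observing that $\Psi\Psi(0)$ preserves the $L^1_\partial$-norm on nonnegative elements.
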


We give the proof of Theorem~\ref{t:main} in the next section.
Theorem~\ref{t:main} combined with \cite{baronlasota93} implies the following sufficient conditions for the existence of steady states of~\eqref{a:eq}--\eqref{c:ic}.

\begin{corollary}\label{c:barlas}
Assume conditions \eqref{a:Asb} and \eqref{a:Asa}. Suppose that $\Ha\in L^1(0,\infty)$ and that $|\sQ(0)|<\infty$.
If
\begin{equation}\label{e:ETAfinite}
\Ex(T_A):=
\int_0^\infty \Ha(a)da<\liminf_{x\to \infty} (\sQ(\lambda(x))-\sQ(x))
\end{equation}
then \eqref{a:eq}--\eqref{c:ic} has a steady state and it is unique if, additionally, $\psi(a)>0$ for all sufficiently large $a$. Conversely, if there is $x_0\ge 0$ such that $\Ha(Q(\lambda(x_0)))>0$ and
$\Ex(T_A)>\sup_{x\ge x_0} (\sQ(\lambda(x))-\sQ(x))
$, then \eqref{a:eq}--\eqref{c:ic} has no steady states.
\end{corollary}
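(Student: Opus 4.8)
The plan is to use Theorem~\ref{t:main} to convert the statement about the PDE system into a statement about the single Markov operator $P$, and then to quote the existence/uniqueness and non-existence criteria of Baron and Lasota~\cite{baronlasota93}. Since $\Ha\in L^1(0,\infty)$, Theorem~\ref{t:main} tells us that \eqref{a:eq}--\eqref{c:ic} has a steady state if and only if $P$ from \eqref{op:cc1} has a steady state in $L^1(0,\infty)$, that is, a probability density $f$ with $Pf=f$. Moreover the correspondence between the two sets of steady states produced in the proof of Theorem~\ref{t:main} is one-to-one, so uniqueness for one is equivalent to uniqueness for the other. Thus it suffices to decide, under \eqref{e:ETAfinite} and under its converse, whether the operator $P$ admits a (unique) invariant density.

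First I would record the probabilistic meaning of $P$. By the computation behind \eqref{op:cc1}, $P$ is the transfer (Frobenius--Perron) operator of the Markov chain of sizes at birth
\[
X_{n+1}=\tfrac12\pi_{T_A+T_B}X_n=\tfrac12\sQ^{-1}\bigl(\sQ(X_n)+T_A+T_B\bigr),
\]
where the phase-$A$ durations $T_A$ are independent with density $\psi$ and $T_B$ is fixed; an invariant density of $P$ is exactly a stationary density of this chain, which is a steady state in the sense defined above. The central identity is $\sQ(\lambda(x))-\sQ(x)=\sQ(2x)-\sQ(x)-T_B$, valid once $x$ is large enough that the maximum in the definition of $\lambda$ is inactive. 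Writing $\Delta(x):=\sQ(\lambda(x))-\sQ(x)$, the relation $\sQ(2X_{n+1})=\sQ(X_n)+T_A+T_B$ shows that $\Delta(x)$ is precisely the value of $T_A$ for which a mother born at size $x$ would produce a daughter born at the same size $x$; hence $X_{n+1}<X_n$ when $T_A<\Delta(X_n)$ and $X_{n+1}>X_n$ when $T_A>\Delta(X_n)$. I would then pass to the intrinsic coordinate $s=\sQ(x)$, which is legitimate on a genuine half-line bounded below because $|\sQ(0)|<\infty$; in this coordinate the growth flow is a translation and $P$ takes exactly the Volterra-type form to which~\cite{baronlasota93} applies, with controlling quantity $\Delta$ and mean jump $\Ex(T_A)=\int_0^\infty\Ha(a)\,da$.

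With this dictionary the hypotheses match directly. Condition \eqref{e:ETAfinite}, namely $\Ex(T_A)<\liminf_{x\to\infty}\Delta(x)$, says that for all sufficiently large mother sizes the typical $T_A$ lies below $\Delta(x)$, so the chain is pushed away from infinity; this is the drift condition in the existence half of~\cite{baronlasota93} and yields an invariant density, hence a steady state of \eqref{a:eq}--\eqref{c:ic}. The extra assumption that $\psi(a)>0$ for all large $a$ supplies the overlapping-supports / irreducibility condition forcing the invariant density to be unique, and uniqueness transfers back to the PDE through the one-to-one correspondence of Theorem~\ref{t:main}. For the converse, $\Ex(T_A)>\sup_{x\ge x_0}\Delta(x)$ makes $X_{n+1}>X_n$ likely on the whole region $\{x\ge x_0\}$, so the chain drifts to $+\infty$ and no invariant density can exist; the assumption $\Ha(\sQ(\lambda(x_0)))>0$ is the reachability condition ensuring that the chain actually enters this region, so the non-existence half of~\cite{baronlasota93} applies.

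The main obstacle I anticipate is the verification underlying the second and third paragraphs: checking that, after the change of variables $s=\sQ(x)$, the kernel of $P$ satisfies verbatim the regularity and measurability hypotheses of~\cite{baronlasota93} and that their drift functional reduces exactly to the comparison between $\Ex(T_A)$ and $\Delta(x)$. The delicate spots are the boundary behaviour as $x\to0^+$ (controlled by $|\sQ(0)|<\infty$) and the switch-off of the maximum in $\lambda(x)=\max\{\sQ^{-1}(\sQ(2x)-T_B),0\}$ for small $x$, which must be handled so that the asymptotic drift condition at infinity is unaffected. Once the kernel is matched to their framework, the existence, uniqueness and non-existence conclusions follow as immediate applications of~\cite{baronlasota93}.
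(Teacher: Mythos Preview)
Your proposal is correct and follows exactly the route the paper indicates: the paper states, without a formal proof environment, that the corollary follows from ``Theorem~\ref{t:main} combined with \cite{baronlasota93}''. Your write-up is the natural elaboration of this one-line justification---reducing via Theorem~\ref{t:main} to invariant densities of $P$, passing to the coordinate $s=\sQ(x)$ (legitimate since $|\sQ(0)|<\infty$), and matching the drift quantity $\sQ(\lambda(x))-\sQ(x)$ and the support condition on $h$ to the hypotheses of Baron--Lasota.
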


If the cell growth is exponential so that we have $g(x)=kx$ for all $x>0$, where $k$ is a positive constant,  then  it is known \cite{hannsgen85a,tyson85,tyrcha01} that the operator $P$ has no steady state. We now consider a linear cell growth and assume that $g(x)=k$ for all $x>0$. We see that $\sQ(x)=x/k$, the operator  $P$  is of the form (see \cite{tyson85} or the last section)
\[
Pf(x)=\frac{2}{k}\int_0^{2x-kT_B}\psi((2x-kT_B-r)/k)f(r)dr \mathbf{1}_{(0,\infty)}(2x-kT_B), \quad x>0,
\]
and condition \eqref{e:ETAfinite} holds if and only if $\Ex(T_A)<\infty$.  Combining Corollary~\ref{c:barlas} with Theorem~\ref{t:main} implies the following.
\begin{corollary}
Assume that $g(x)=k$ for $x>0$ and  that $\psi(a)>0$ for all sufficiently large $a>0$. If  $\Ex(T_A)<\infty$ then the semigroup $\{S(t)\}_{t\ge 0}$ has a unique steady state.
\end{corollary}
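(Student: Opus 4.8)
The plan is to specialize Corollary~\ref{c:barlas} to the linear growth rate $g\equiv k$ and then transport the conclusion to the semigroup via Theorem~\ref{t:main}. First I would record the standing hypotheses of the section: condition \eqref{a:Asb} is assumed throughout, and condition \eqref{a:Asa} holds because the constant function $g(x)=k$ is positive and (trivially) globally Lipschitz. Since $g\equiv k$, formula \eqref{d:Q} gives $\sQ(x)=\int_0^x \frac{1}{k}\,dr=x/k$, so one may take $\bar x=0$; hence $\sQ(0)=0$, in particular $|\sQ(0)|<\infty$, and the inverse is $\sQ^{-1}(s)=ks$. Moreover $\Ex(T_A)=\int_0^\infty \Ha(a)\,da<\infty$ by assumption, so $\Ha\in L^1(0,\infty)$; this is precisely the integrability hypothesis required both in Corollary~\ref{c:barlas} and in the steady-state equivalence of Theorem~\ref{t:main}.

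The only substantive point is to verify the geometric condition \eqref{e:ETAfinite}. Using $\sQ^{-1}(s)=ks$ and $\sQ(2x)=2x/k$, the map $\lambda$ becomes $\lambda(x)=\max\{\sQ^{-1}(\sQ(2x)-T_B),0\}=\max\{2x-kT_B,0\}$, so that for $x>kT_B/2$ we have $\lambda(x)=2x-kT_B$ and
\[
\sQ(\lambda(x))-\sQ(x)=\frac{2x-kT_B}{k}-\frac{x}{k}=\frac{x}{k}-T_B .
\]
Letting $x\to\infty$ shows $\liminf_{x\to\infty}\bigl(\sQ(\lambda(x))-\sQ(x)\bigr)=+\infty$, so \eqref{e:ETAfinite} reduces to the single inequality $\Ex(T_A)<\infty$, which holds by hypothesis. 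This is exactly the equivalence asserted in the text preceding the statement (namely that, for $g\equiv k$, condition \eqref{e:ETAfinite} holds if and only if $\Ex(T_A)<\infty$).

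With \eqref{e:ETAfinite} verified, $\Ha\in L^1(0,\infty)$, and $|\sQ(0)|<\infty$, the first part of Corollary~\ref{c:barlas} produces a steady state of \eqref{a:eq}--\eqref{c:ic}; since in addition $\psi(a)>0$ for all sufficiently large $a$, the same corollary gives uniqueness. Finally, because $\Ha\in L^1(0,\infty)$, Theorem~\ref{t:main} identifies steady states of the system with steady states of the semigroup $\{S(t)\}_{t\ge 0}$ (equivalently, with fixed densities of the operator $P$ in \eqref{op:cc1}), so the unique steady state of \eqref{a:eq}--\eqref{c:ic} is precisely the unique steady state of $\{S(t)\}_{t\ge 0}$.

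I do not expect a genuine obstacle here, since the argument is a direct verification resting on two results already established. The one place requiring care is the evaluation of $\liminf_{x\to\infty}\bigl(\sQ(\lambda(x))-\sQ(x)\bigr)$: one must check that the linear growth of $\lambda(x)-x$ forces this liminf to equal $+\infty$, so that finiteness of $\Ex(T_A)$ alone suffices to trigger \eqref{e:ETAfinite}. The remaining subtlety is purely bookkeeping, namely confirming through Theorem~\ref{t:main} that ``steady state of \eqref{a:eq}--\eqref{c:ic}'' and ``steady state of $\{S(t)\}_{t\ge 0}$'' denote the same object, so that both existence and uniqueness transfer verbatim to the semigroup.
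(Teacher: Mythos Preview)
Your proposal is correct and follows essentially the same route as the paper: specialize to $g\equiv k$, compute $\sQ(x)=x/k$ and $\lambda(x)=\max\{2x-kT_B,0\}$, observe that $\sQ(\lambda(x))-\sQ(x)=x/k-T_B\to\infty$ so that \eqref{e:ETAfinite} reduces to $\Ex(T_A)<\infty$, and then invoke Corollary~\ref{c:barlas} together with Theorem~\ref{t:main}. The only superfluous step is your final ``transfer'' via Theorem~\ref{t:main}: since that theorem already identifies $\{S(t)\}_{t\ge0}$ as the solution semigroup of \eqref{a:eq}--\eqref{c:ic}, a steady state of the system and a steady state of the semigroup are the same object by definition, so Corollary~\ref{c:barlas} delivers the conclusion directly.
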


\section{Proof of Theorem~\ref{t:main}}

We will show that Theorem~\ref{t:main} can be deduced from Theorems~\ref{l:Apsi} and~\ref{t:mainex}. To this end, we introduce some notation. Let us define
\begin{equation*}
\pi(t,a_0,x_0)=(a_0+t,\pi_tx_0),\quad a_0 ,x_0\ge 0, t\in \mathbb{R},
\end{equation*}
where $\pi_t$ is given by \eqref{d:pit}.
Then $t\mapsto \pi(t,a_0,x_0)$ solves the system of equations
 $a'(t)=1$ and $x'(t)=g(x(t))$ with initial condition $a(0)=a_0$ and $x(0)=x_0$. Recall that  $E_1$ is an open set.  For any $x_0,a_0\in E_1$ we define
\[
\nlife(a_0,x_0)=\inf\{s>0:\pi(-s,a_0,x_0)\not\in \overline{E}_1\}
\]
and the incoming part of the boundary $\partial E_1$
\[
\Gamma_1^{-}=\{z\in \partial E_1: z=\pi(-\nlife(y),y) \text{ for some } y\in E_1 \text{ with }\nlife(y)<\infty\}. \]
Observe that $\nlife(a_0,x_0)=a_0$ for all $(a_0,x_0)\in E_1$ and that $\Gamma_1^{-}=\{0\}\times (0,\infty)$. We consider on  $\Gamma_1^{-}$ the Borel measure $m_1^{-}$ being the product of the point measure $\delta_{0}$ at $0$  and the Lebesgue measure on $(0,\infty)$.
We define the operator $\mathrm{T}_{\max}$ on $L^1(E_1)$ by (\cite{arlottibanasiaklods07}) \[
\mathrm{T}_{\max}f(a,x)=-\frac{\partial (f(a,x))}{\partial
a}-\frac{\partial (g(x)f(a,x))}{\partial x}
\]
with domain
\[
\mathcal{D}(\mathrm{T}_{\max})=\{f\in L^1(E_1): \mathrm{T}_{\max}f\in L^1(E_1)\},
\]
where the differentiation is understood in the sense of distributions.
Then it follows from \cite{arlottibanasiaklods07} that for $f\in \mathcal{D}(\mathrm{T}_{\max})$ the following limit
\[
\mathrm{B}^{-}f(z)=\lim_{t\to 0}f(\pi(t,z)) 
\]
exists for almost every $z\in \Gamma_1^{-}$ with respect to the measure $m_1^{-}$ on $\Gamma_1^{-}$.
According to \cite[Theorem 4.4]{arlottibanasiaklods07} the operator $T_0=\mathrm{T}_{\max}$ with domain
\[
\mathcal{D}(\mathrm{T}_0)=\{f\in \mathcal{D}(\mathrm{T}_{\max}): \mathrm{B}^{-}f=0
\}
\]
is the generator of a substochastic semigroup on $L^1(E_1)$ given by
\[
U_0(t)f(a,x)=\frac{g(\pi_{-t}x)}{g(x)}f(a-t,\pi_{-t} x)\mathbf{1}_{\{t<\nlife(a,x)\}}(a,x), \quad (a,x)\in E_1, f\in L^1(E_1).
\]
By \cite[Proposition 5.1]{arlottibanasiaklods07},  the operator $(\mathrm{T},\mathcal{D}(\mathrm{T}))$ defined by
\[
\mathrm{T}f=\mathrm{T}_{\max}f-\rho f, \quad f\in \mathcal{D}(\mathrm{T})=\{f\in\mathcal{D}(\mathrm{T}_0):\rho f\in L^1(E_1)\}
\]
is the generator of a substochastic semigroup on $L^1(E_1)$ of the from
\[
U_1(t)f(a,x)=e^{-\int_0^t \rho(a-r)dr}U_0(t)f(a,x),  \quad (a,x)\in E_1, f\in L^1(E_1).
\]

Note that we can identify the space $L^1(E_2)$ with the subspace
\[
Y=\{f\in L^1(E_1):  f(a,x)=0 \text{ for a.e. } (a,x)\in  E_1\setminus E_2\}
\]
of $L^1(E_1)$
and we have $\mathrm{T}_{\max}(\mathcal{D}(\mathrm{T}_{\max})\cap L^1(E_2))\subseteq L^1(E_2)$.
We set
\[
\nlife(a_0,x_0)=\inf\{s>0: \pi(-s,a_0,x_0)\not\in \overline{E_2}\}=a_0,\quad (a_0,x_0)\in E_2,
\]
and
\[
\Gamma_2^{-}=\{z\in \partial E_2: z=\pi(-\nlife(y),y) \text{ for some } y\in E_2 \text{ with }\nlife(y)<\infty\}.
\]
We also define the exit time from the set $E_2$ by
\[
\life(a_0,x_0)=\inf\{s>0:\pi(s,a_0,x_0)\not\in \overline{E}_2\}
\]
and the outgoing part of the boundary $\partial E_2$
\[
\Gamma_2^{+}=\{z\in \partial E_2: z=\pi(\life(y),y) \text{ for some } y\in E_2\}.
\]
We have $\life(a_0,x_0)=T_B-a_0$ and $\Gamma_2^{+}=\{(T_B,x): x>\pi_{T_{B}}0\}$.
We define the Borel measure $m_2^{-}$ on $\Gamma_2^{-}$ as the measure $m_1^{-}$ and the $m_2^{+}$ on $\Gamma_2^{+}$ as the product of the point measure at $T_B$ and the one dimensional Lebesgue measure.
Since  $U_0(t)(L^1(E_2))\subseteq L^1(E_2)$, the part of the operator $(\mathrm{T}_{0},\mathcal{D}(\mathrm{T}_0))$ in $L^1(E_2)$ is the generator of a substochastic semigroup $\{U_2(t)\}_{t\ge 0}$ in $L^1(E_2)$. Moreover, the following pointwise limits exists
\[
\mathrm{B}^{\pm}f(z)=\lim_{t\to 0}f(\pi(\mp t,z))\quad\text{for } f\in\mathcal{D}(\mathrm{T}_{\max})\cap L^1(E_2)
\]
for  almost every $z\in \Gamma_2^{\pm}$ with respect to the Borel measure $m_2^{\pm}$ on $\Gamma_2^{\pm}$.

Let $E_{\partial}= \Gamma_1^{-}\times\{1\}\cup \Gamma_2^{-}\times\{2\}$,  $\E_{\partial}$ be the $\sigma$-algebra of Borel subsets of $E_{\partial}$ and $m_{\partial}$ be the product of the Lebesgue measure on the line $\{0\}\times(0,\infty)$ and the counting measure on $\{1,2\}$. To simplify the notation we identify $L^1_{\partial}=L^1(E_{\partial},\ES_{\partial},m_{\partial})$ with the product space $L^1(0,\infty)\times L^1(0,\infty)$.
We define  operators $A_1$ and $A_2$ by
\begin{align}
A_1f_1&=\mathrm{T}_{\max}f_1-\rho f_1, \quad f_1\in \mathcal{D}_1=\{f_1\in L^1(E_1): \mathrm{T}_{\max}f_1,\rho f_1\in L^1(E_1)\},\\ A_2 f_2&=\mathrm{T}_{\max}f_2,\quad f_2\in \mathcal{D}_2=\{f_2\in L^1(E_2): \mathrm{T}_{\max}f_2\in L^1(E_2)\}.
\end{align}
We set
\[
\mathcal{D}=\{(f_1,f_2)\in \mathcal{D}_1\times \mathcal{D}_2:\mathrm{B}^{-}f_1,\mathrm{B}^{-}f_2\in L^1(0,\infty)\}
\]
and we define the operator $A$ on $\mathcal{D}$ by setting $Af=(A_1f_1,A_2f_2)$ for $f=(f_1,f_2)\in \mathcal{D}$. We take operators $\Psi_0,\Psi\colon \mathcal{D}\to L^1_{\partial}$ of the form
\begin{equation}
\Psi_0 f=(\mathrm{B}^{-}f_1,\mathrm{B}^{-}f_2),\quad f=(f_1,f_2)\in \mathcal{D},
\end{equation}
and
\begin{equation}
\Psi f(x)=\left( 2\mathrm{B}^{+}f_2(T_B,2x )\mathbf{1}_{(\pi_{T_B}(0),\infty)}(2x), \int_0^{\infty} \rho(a) f_1(a,x)\mathbf{1}_{(0,\infty)}(\pi_{-a}x)da\right) \end{equation}
for $f=(f_1,f_2)\in \mathcal{D}$.
We show that the operator $(A_{\Psi},\mathcal{D}(A_{\Psi}))$ is the generator of a positive  semigroup on $L^1$, where $A_\Psi f=Af$ for $f\in \mathcal{D}(A_{\Psi})=\{f\in \mathcal{D}: \Psi_0f=\Psi f\}$.
To this end, we check that assumptions \eqref{S1}--\eqref{S4} of Theorem~\ref{l:Apsi} from Section \ref{s:pre} are satisfied.

We first show that conditions \eqref{S2} and \eqref{S4} hold.
The operator $A$ restricted to $\mathcal{D}(A_0)=\{(f_1,f_2)\in \mathcal{D}_1\times \mathcal{D}_2: \mathrm{B}^{-}f_1=0, \mathrm{B}^{-}f_2=0\}$ is the generator of the  semigroup $\{S_0(t)\}_{t\ge 0}$ given by
\[
S_0(t)f=(U_1(t)f_1,U_2(t)f_2), \quad t\ge 0, f=(f_1,f_2)\in L^1,
\]
since $\{U_1(t)\}_{t\ge 0}$ and $\{U_2(t)\}_{t\ge 0}$ are semigroups on the spaces $L^1(E_1)$ and $L^1(E_2)$ with the corresponding generators. The semigroup $\{S_0(t)\}_{t\ge 0}$  is substochastic.
For all nonnegative $f=(f_1,f_2)\in \mathcal{D}$ we have
\begin{align*}
\int_E Afdm-\int_{E_\partial} \Psi_0 fdm_{\partial}&=
\int_{E_1} A_1f_1(a,x)d a d x+\int_{E_2}A_2f_2(a,x)d a d x
\\
&\quad -\int_{\Gamma_{1}^{-}}\mathrm{B}^{-}f_1(z)m_{1}^{-}(dz)-\int_{\Gamma_{2}^{-}}\mathrm{B}^{-}f_2(z)m_{2}^{-}(dz).
\end{align*}
By \cite[Proposition 4.6]{arlottibanasiaklods07}, this reduces to
\begin{equation}\label{eq:intA}
\int_E Afdm-\int_{E_\partial} \Psi_0 fdm_{\partial}=-\int_{E_1} \rho(a)f_1(a,x)\,d a d x-\int_{\Gamma_2^{+}}\mathrm{B}^{+}f_2(z)m_2^{+}(dz),
\end{equation}
implying that condition \eqref{S4} holds.

For $f=(f_1,f_2)\in \mathcal{D}$ we can rewrite the equation $\lambda f-Af=0$ as
\begin{align*}
\frac{\partial }{\partial
a}\left(e^{\int_0^a \rho (r)dr}f_1(a,x)\right)&=-\frac{\partial }{\partial x}(g(x)f_1(a,x))-\lambda f_1(a,x) , \\ \frac{\partial }{\partial
a}(f_2(a,x))&=-\frac{\partial }{\partial x}(g(x)f_2(a,x))-\lambda f_2(a,x).
\end{align*}
Hence, we see that the right inverse of $\Psi_0$ when restricted to the nullspace of $\lambda I-A$ is given by
\begin{equation}
\label{eq:psil2p}
\Psi(\lambda)f_{\partial}(a,x)\\=\left(e^{-\lambda a-\int_0^a\rho (r)dr}f_{\partial,1}(\pi_{-a}x), e^{-\lambda a}f_{\partial,2}(\pi_{-a}x)\mathbf{1}_{(0,T_B)}(a)\right)\frac{g(\pi_{-a}x)}{g(x)}
\end{equation}
for $(a,x)\in E_1$ and $f_{\partial}=(f_{\partial,1},f_{\partial,2})\in L^1_{\partial}$.
Moreover, if $(f_1,f_2)=\Psi(\lambda)f_{\partial}$ then
\[
\mathrm{B}^{-}f_1(0,x)=\lim_{t\to 0}f_1(t,\pi_tx)=\lim_{t\to 0}e^{-\lambda t-\int_0^t \rho(r)dr}f_{\partial,1}(x)=f_{\partial,1}(x).
\]
Thus $f_1\in \mathcal{D}_1$. Similarly, $f_2\in \mathcal{D}_2$. Hence, condition \eqref{S1} holds.

To check condition \eqref{S3} take $\lambda>0$ and  $f_{\partial}\in L^1_{\partial}$. For  $(f_1,f_2)=\Psi(\lambda)f_{\partial}$  we have
\[
f_2(a,x)=e^{-\lambda a}f_{\partial,2}(\pi_{-a}x)\frac{g(\pi_{-a}x)}{g(x)}\mathbf{1}_{(0,\infty)}(\pi_{-a}x)\mathbf{1}_{(0,T_B)}(a).
\]
This implies that
\[
\begin{split}
\mathrm{B}^{+}f_2(T_B,x)&=\lim_{t\to 0} f_2(T_B-t,\pi_{-t}x)\\
&=\lim_{t\to 0}  e^{-\lambda(T_B-t)}f_{\partial,2}(\pi_{-T_B}x)\frac{g(\pi_{-T_B}x)}{g(\pi_{-t}x)}\mathbf{1}_{(0,\infty)}(\pi_{-T_B}x)\\
&=e^{-\lambda T_B}f_{\partial,2}(\pi_{-T_B}x)\frac{g(\pi_{-T_B}x)}{g(x)}\mathbf{1}_{(0,\infty)}(\pi_{-T_B}x).
\end{split}
\]
Hence,
\[
\Psi \Psi(\lambda)f_\partial(x)=\left( (\Psi \Psi(\lambda)f_\partial)_1(x), (\Psi \Psi(\lambda)f_\partial)_2(x) \right),
\]
where
\[
(\Psi \Psi(\lambda)f_\partial)_1(x)=2e^{-\lambda T_B}f_{\partial,2}(\pi_{-T_B}(2x))\frac{g(\pi_{-T_B}(2x))}{g(2x)}\mathbf{1}_{(0,\infty)}(\pi_{-T_B}(2x))
\]
and, by \eqref{d:H},
\[
(\Psi \Psi(\lambda)f_\partial)_2(x)= \int_0^{\infty}\psi(a)e^{-\lambda a}
f_{\partial,1}(\pi_{-a}x)\frac{g(\pi_{-a}x)}{g(x)}\mathbf{1}_{(0,\infty)}(\pi_{-a}x)da.
\]
For $f_\partial\in L^1_{\partial}$ we obtain
\begin{align*}
\|\Psi\Psi(\lambda)f_\partial\|
&\le e^{-\lambda T_B}\int_{0}^{\infty}|f_{\partial,2}(z)|dz +\int_0^\infty\psi(a)e^{-\lambda a} da \int_0^{\infty}|f_{\partial,1}(y)| dy\\ &\le \max\left\{e^{-\lambda T_B},\int_0^\infty\psi(a)e^{-\lambda a} da\right\}\|f_{\partial}\|,
\end{align*}
showing that $\|\Psi\Psi(\lambda)\|<1$ for all $\lambda>0$.
Consequently, it follows from Theorem~\ref{l:Apsi} that the operator $(A_{\Psi},\mathcal{D}(A_{\Psi}))$ is the generator of a positive semigroup $\{S(t)\}_{t\ge 0}$  on $L^1$. The semigroup $\{S(t)\}_{t\ge 0}$  is    stochastic, since \eqref{eq:APsiz} holds by \eqref{eq:intA}.

Next assume that $\Ha\in L^1(0,\infty)$. By Theorem~\ref{t:mainex}, it remains to
 look for fixed points of the operator $\Psi\Psi(0)$. Here $\Psi(0)$ defined as in \eqref{d:P0} is,  by \eqref{eq:psil2p}, of the form\begin{equation}\label{eq:psil0}
\Psi(0)f_{\partial}(a,x)=\left(e^{-\int_0^a\rho (r)dr}f_{\partial,1}(\pi_{-a}x),f_{\partial,2}(\pi_{-a}x)\mathbf{1}_{[0,T_B)}(a)\right)\frac{g(\pi_{-a}x)}{g(x)}
\end{equation}
for $(a,x)\in E_1$.
Observe that $\Psi(0)f_{\partial}\in L^1$  for $f_{\partial}\in L^1_{\partial}$, since $e^{-\int_0^a\rho (r)dr}=\Ha(a)$, by \eqref{d:H}, and
\[
\|\Psi(0)f_{\partial}\|\le \int_{0}^\infty \Ha(a)da \int_{0}^\infty |f_{\partial,1}(y)|dy+ T_B \int_{0}^\infty |f_{\partial,2}(y)|dy.
\]
We have $\pi_{-T_B}(2x)=\sQ^{-1}(\sQ(2x)-T_B)=\lambda(x)$ if $2x>\pi_{T_B}0$ and
\begin{equation}\label{d:lambdap}
\lambda'(x)=2\frac{g(\lambda(x))}{g(2x)}\mathbf{1}_{(0,\infty)}(\lambda(x)).
\end{equation}
Hence
\[
(\Psi\Psi(0)f_{\partial})_1(x)=f_{\partial,2}(\lambda(x))\lambda'(x)
\]
and
\[
(\Psi\Psi(0)f_{\partial})_2(x)=\int_{0}^{\infty} \rho(a)e^{-\int_0^a\rho (r)dr}f_{\partial,1}(\pi_{-a}x)\frac{g(\pi_{-a}x)}{g(x)}\mathbf{1}_{(0,\infty)}(\pi_{-a}x)da.
\]
If $f_\partial =\Psi\Psi(0)f_{\partial}$ then $f_{\partial,2}(x)=(\Psi\Psi(0)f_{\partial})_2(x)$ and  $f_{\partial,1}$ satisfies
\begin{align*}
f_{\partial,1}(x)&=(\Psi\Psi(0)f_{\partial})_1(x)\\ &=2\int_{0}^{\infty}\psi(a)f_{\partial,1}(\pi_{-a}(\lambda(x)))\frac{g(\pi_{-a}(\lambda(x)))}{g(2x)}\mathbf{1}_{(0,\infty)}(\pi_{-a}(\lambda(x)))da. \end{align*}
By changing the variables $r=\pi_{-a}(\lambda(x))$, we arrive at the equation
\begin{equation}\label{e:fixpart1}
f_{\partial,1}(x)=\frac{2}{g(2x)}\int_{0}^{\lambda(x)}\psi(\sQ(\lambda(x))-\sQ(r))f_{\partial,1}(r)dr, \quad x>0.
\end{equation}
Equivalently,
$
f_{\partial,1} =Pf_{\partial,1}
$
where $P$ is as in \eqref{op:cc1}. Consequently, equation $\Psi\Psi(0)f_{\partial}=f_{\partial}$ has a solution in $L^1_{\partial}$ if and only if the equation $Pf_{\partial,1}=f_{\partial,1}$ has a solution in $L^1(0,\infty)$. Observe also that the operator $\Psi\Psi(0)$ preserves the $L^1_{\partial}$ norm on nonnegative elements. Hence, if $f_{\partial}\in L^1_{\partial}$ is such that $\Psi\Psi(0)f_{\partial}\le f_{\partial}$ then  $\Psi\Psi(0)f_{\partial}= f_{\partial}$.  Thus the assertion follows from Theorem~\ref{t:mainex}.

\section{Final remarks}\label{s:fr}

Our model can be described as a piecewise deterministic Markov process $\{X(t)\}_{t\ge 0}$. We considered three variables $(a,x,i)$, where $i=1$ if a cell is in the phase $A$, $i=2$ if it is in the phase $B$,
the variable $x$ describes the cell size, and $a$ describes the time which elapsed since the cell entered the $i$th phase. Let $t_0=0$. If we observe consecutive descendants of a given cell and the $n$th generation time is denoted by $t_n$, then $t_{n+1}=s_{n}+T_{B}$ where   $s_n$ is the time when the cell from the $n$th generation  enters the phase $B$,  $n\ge 0$. A newborn cell at time $t_n$ is with age $a(t_n)=0$ and with initial size equal to $x(t_n^-)/2$, where $x(t_n^-)$ is the size of its mother cell.   The cell ages with velocity $1$ and its size grows according to the equations $x'(t)=g(x(t))$ for $t\in (t_n,s_n)$. If the cell enters the phase $B$ then its age is reset to $0$ and its size still grows according to $x'(t)=g(x(t))$ for $t\in (s_n,s_n+T_B)$.  We have
\begin{equation}\label{d:jumpsn}
a(s_n)=0,\quad x(s_n)=x(s_n^-), \quad i(s_n)=2,
\end{equation}
and at the end of the second phase the cell divides into two cells, so that we have
\begin{equation}\label{d:jumptn}
a(t_{n+1})=0,\quad x(t_{n+1})=\frac12 x(t_{n+1}^-), \quad i(t_{n+1})=1.
\end{equation}
Thus the process $X(t)=(a(t),x(t),i(t))$ satisfies the  following system of ordinary differential equations
\begin{equation*}
a'(t)=1,\quad x'(t)=g(x(t)),\quad i'(t)=0,
\end{equation*}
between consecutive  times $t_0,s_0,t_1,s_1,\ldots$, called \emph{jump times}.  At jump times the process is given by \eqref{d:jumpsn} and \eqref{d:jumptn}. If the distribution of $X(0)$ has a density $f$ then $X(t)$ has a density $S(t)f$, i.e.,
\[
\Pr(X(t)\in B_i\times\{i\})=\int_{B_i} (S(t)f)_i(a,x)dadx
\]
for any Borel set $B_i\subset E_i$, where $\{S(t)\}_{t\ge 0}$ is the stochastic semigroup from Theorem~\ref{t:main}.

If $f_{\partial,1}$ is the density of the size distribution at time $t_0=0$ and $f_{\partial,2}$ is the density of the distribution of size at time $s_1$, then the distribution of size at time $t_{1}$  is given by
\[
\Pr(x(t_{1})\le x)=\Pr(\pi_{T_B}x(s_{1})\le 2x)=\Pr(x(s_{1})\le \lambda(x))=\int_0^{\lambda(x)}f_{\partial,2}(z)dz
\]
and
\begin{equation}\label{e:fixpart2}
f_{\partial,2}(z)=\int_0^\infty \psi(a)\hat{\pi}_af_{\partial,1}(z)da,
\end{equation}
where
\[
\hat{\pi}_af_{\partial,1}(z)=f_{\partial,1}(\pi_{-a}z)\frac{g(\pi_{-a}z)}{g(z)}
\mathbf{1}_{(0,\infty)}(\pi_{-a}z)
\]
is the density of the size $x(a)$ of the cell at time $a$, if  $x(0)$ has a density $f_{\partial,1}$. Thus the density of the mass $x(t_{1})$ is given by
\[
\frac{d}{dx}\Pr(x(t_{1})\le x)=f_{\partial,2}(\lambda(x))\lambda'(x)=Pf_{\partial,1}(x)
\]
for Lebesgue almost every $x\in (0,\infty)$, where $P$ is as in \eqref{op:cc1}. Now, if the operator $P$ has a steady state $f_{\partial,1}\in L^1(0,\infty)$ so that $f_{\partial,1}$ satisfies \eqref{e:fixpart1} and if $f_{\partial,2}$ is as in \eqref{e:fixpart2}, then $f^*=(f_1^*,f_2^*)$ given by
\begin{equation}
f_1^*(a,x)=e^{-\int_0^a\rho(r)dr}\hat{\pi}_a f_{\partial,1}(x),\quad f_2^*(a,x)=\hat{\pi}_a f_{\partial,2}(x) \mathbf{1}_{(0,T_B)}(a)
\end{equation}
is the steady state for the semigroup $\{S(t)\}_{t\ge 0}$ existing by Theorem~\ref{t:main}. Moreover, it is unique if $P$ has a unique steady state.

\begin{remark}
 It should be noted that in the two-phase cell cycle model in \cite{pichorrudnicki2018} the rate of exit from the phase $A$ depends on $x$, not on $a$, and that there is no such equivalence between the existence of steady states as presented in Theorem~\ref{t:main}.
 Our results remain true if we assume as in \cite{pichorrudnicki2018} that division into unequal parts takes place.
 Methods as in \cite{rudnickityran17,pichorrudnicki2018} can also be used in our model to study asymptotic behaviour of the semigroup $\{S(t)\}_{t\ge 0}$.  For a different approach to study positivity and asymptotic behaviour of solutions of population equations in $L^1$ we refer to~\cite{rhandi98}.
\end{remark}

We conclude this section with an extension of the  age-size dependent model from \cite{bell1967cell} to a model with two phases. Let $p_i(t,a,x)$  be the function representing the distribution
of cells over all individual states $a$ and $x$ at time $t$ in the phase $A$ for  $i=1$ or $B$ for $i=2$, i.e.,
$\int_{a_1}^{a_2}\int_{x_1}^{x_2}p_i(t,a,x)dadx$ is the number of cells
with age between $a_1$ and $a_2$ and size between $x_1$ and $x_2$ at time $t$ in the given phase. Then $p_1$ and $p_2$ satisfy equations \eqref{a:eq}, \eqref{c:bc2}, \eqref{c:ic} while
the boundary condition
 \eqref{c:bc} takes the form
\begin{equation}\label{e:bc}
p_1(t,0,x)=4p_2(t,T_B,2x),\quad x>0, t>0,
\end{equation}
since a mother cell at the moment of division $T_B$ has size $2x$ and gives birth to two daughters of size $x$ entering the phase $A$ at age $0$.

\begin{theorem}
Assume conditions \eqref{a:Asb} and \eqref{a:Asa}.
Then there exists a unique positive semigroup on $L^1$ which provides solutions of \eqref{a:eq}, \eqref{e:bc}, \eqref{c:bc2}, \eqref{c:ic}.
\end{theorem}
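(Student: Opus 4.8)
The plan is to follow the reduction used in the proof of Theorem~\ref{t:main} essentially verbatim, changing only the single numerical factor forced by replacing the boundary condition \eqref{c:bc} with \eqref{e:bc}. Concretely, I would keep the spaces $L^1,L^1_{\partial}$, the operators $A$, $A_0$, $\Psi_0$, and the right inverse $\Psi(\lambda)$ of \eqref{eq:psil2p} exactly as in the proof of Theorem~\ref{t:main}, and redefine the boundary perturbation by inserting the factor $4$ in place of $2$ in its first component, i.e.
\[
\Psi f(x)=\left( 4\mathrm{B}^{+}f_2(T_B,2x )\mathbf{1}_{(\pi_{T_B}(0),\infty)}(2x),\ \int_0^{\infty} \rho(a) f_1(a,x)\mathbf{1}_{(0,\infty)}(\pi_{-a}x)\,da\right).
\]
It then suffices to verify conditions \eqref{S1}--\eqref{S4} of Theorem~\ref{l:Apsi} for this choice and to invoke that theorem.

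First I would note that conditions \eqref{S1}, \eqref{S2}, and \eqref{S4} depend only on $A$ and $\Psi_0$, which are unchanged; hence they hold by exactly the computations already performed for Theorem~\ref{t:main}, and in particular $\Psi(\lambda)$ is still given by \eqref{eq:psil2p} and the identity \eqref{eq:intA} is unaffected. The only condition requiring a fresh argument is \eqref{S3}. Recomputing $\Psi\Psi(\lambda)$, its second component is identical to the one in the proof of Theorem~\ref{t:main}, while its first component picks up the extra factor,
\[
(\Psi\Psi(\lambda)f_\partial)_1(x)=4 e^{-\lambda T_B} f_{\partial,2}(\pi_{-T_B}(2x))\frac{g(\pi_{-T_B}(2x))}{g(2x)}\mathbf{1}_{(0,\infty)}(\pi_{-T_B}(2x)).
\]
Using \eqref{d:lambdap} and the change of variables $z=\lambda(x)=\pi_{-T_B}(2x)$, the $L^1$-norm of this first component equals $2e^{-\lambda T_B}\|f_{\partial,2}\|$, so that
\[
\|\Psi\Psi(\lambda)f_\partial\|\le \max\left\{2 e^{-\lambda T_B},\ \int_0^\infty\psi(a)e^{-\lambda a}\,da\right\}\|f_{\partial}\|.
\]
Since $\int_0^\infty\psi(a)e^{-\lambda a}\,da<1$ for every $\lambda>0$ and $2e^{-\lambda T_B}<1$ exactly when $\lambda>(\ln 2)/T_B$, I conclude $\|\Psi\Psi(\lambda)\|<1$ for all $\lambda>\omega:=(\ln 2)/T_B$; as $\Psi\Psi(\lambda)$ is positive, its Neumann series then shows that $I_{\partial}-\Psi\Psi(\lambda)$ is invertible with positive inverse for such $\lambda$, which is \eqref{S3}. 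Theorem~\ref{l:Apsi} now gives that $(A_\Psi,\mathcal{D}(A_\Psi))$ generates a positive semigroup on $L^1$ providing the solutions of \eqref{a:eq}, \eqref{e:bc}, \eqref{c:bc2}, \eqref{c:ic}, and uniqueness follows because a $C_0$-semigroup is determined by its generator.

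The main obstacle, and the only place where the doubling of the cell count genuinely alters the analysis, is condition \eqref{S3}. In contrast to Theorem~\ref{t:main}, the contractivity $\|\Psi\Psi(\lambda)\|<1$ no longer holds for all $\lambda>0$ but only beyond the strictly positive threshold $\omega=(\ln 2)/T_B$; this still satisfies the hypothesis of Theorem~\ref{l:Apsi}, so the argument goes through. The positive growth bound is exactly what one should expect: repeating the computation \eqref{eq:intA} for $f\in\mathcal{D}(A_\Psi)$ now yields $\int_E A_\Psi f\,dm=\int_{\Gamma_2^{+}}\mathrm{B}^{+}f_2\,dm_2^{+}\ge 0$ for nonnegative $f$, so the resulting semigroup is positive but not stochastic, reflecting the growth of the population under division into two cells.
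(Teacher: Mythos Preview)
Your proposal is correct and follows essentially the same approach as the paper: the paper's own proof is a one-line reference back to the proof of Theorem~\ref{t:main}, noting only that with the factor~$4$ in the boundary operator one obtains $\|\Psi\Psi(\lambda)f_\partial\|\le\max\{2e^{-\lambda T_B},\int_0^\infty\psi(a)e^{-\lambda a}\,da\}\|f_\partial\|$ and hence $\|\Psi\Psi(\lambda)\|<1$ for $\lambda>\omega=\log 2/T_B$. Your additional remarks on the Neumann series for \eqref{S3} and on the loss of stochasticity via \eqref{eq:intA} are accurate elaborations beyond what the paper records.
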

This follows from Theorem~\ref{l:Apsi} in the same way as Theorem~\ref{t:main}, where now to check condition \eqref{S3} we note that
\[
\|\Psi\Psi(\lambda)f_\partial\|
\le \max\left\{2e^{-\lambda T_B},\int_0^\infty\psi(a)e^{-\lambda a} da\right\}\|f_{\partial}\|
\]
for all $f_\partial\in L^1_{\partial}$  and $\lambda>0$, implying that $\|\Psi\Psi(\lambda)\|<1$ for all $\lambda>\omega$ with $\omega=\log2/T_B$.

%

\end{document}